\DeclareMathOperator*{\argmin}{argmin}
\DeclareMathOperator*{\argmax}{argmax}
\DeclareMathOperator*{\spt}{spt}
\DeclareMathOperator*{\relint}{ri}
\DeclareMathOperator*{\rbd}{rbd}
\DeclareMathOperator*{\aff}{aff}
\DeclareMathOperator*{\ext}{ext}
\DeclareMathOperator*{\coh}{conv}
\DeclareMathOperator{\proj}{proj}
\DeclareMathOperator{\lin}{lin}
\DeclareMathOperator{\maxtr}{\max tr}
\newcommand{\Dom}{D}
\newcommand{\RR}{\mathbb{R}}
\newcommand{\R}{\RR}
\newcommand{\NN}{\mathbb{N}}
\newcommand{\eps}{\varepsilon}
\newcommand{\cP}{\mathcal{P}}
\newcommand{\mykill}[1]{}
\theoremstyle{plain}
\newtheorem{theorem}{Theorem}[section]
\newtheorem{lemma}[theorem]{Lemma}
\newtheorem{corollary}[theorem]{Corollary}
\theoremstyle{definition}
\newtheorem{definition}[theorem]{Definition}
\newtheorem{remark}[theorem]{Remark}
\newtheorem{example}[theorem]{Example}
\newtheorem{question}[theorem]{Question}
\theoremstyle{remark}
\newlist{myenum}{enumerate}{3}
\setlist[myenum,1]{label={\rm (H\arabic*)},
                   ref  ={\rm (H\arabic*)}}
\crefname{myenumi}{property}{properties}
\renewenvironment{thebibliography}[1]{%
\begin{oldthebibliography}{#1}%
\setlength{\baselineskip}{.9em}
\linespread{1}
\small
\setlength{\parskip}{.40ex}%
\setlength{\itemsep}{.30em}%
}%
{%
\end{oldthebibliography}%
}
\begin{document}

\title{\vspace{-2.2em}
Monotonicity in Quadratically Regularized Linear Programs\footnote{The authors thank three anonymous referees for their careful reading and constructive comments.}}
\date{\today}
\author{
  Alberto Gonz{\'a}lez-Sanz%
  \thanks{Department of Statistics, Columbia University, ag4855@columbia.edu} \and 
  Marcel Nutz%
  \thanks{Departments of Mathematics and Statistics, Columbia University, mnutz@columbia.edu. Research supported by NSF Grants DMS-1812661, DMS-2106056, DMS-2407074.}
  \and
  Andr{\'e}s Riveros Valdevenito%
  \thanks{Department of Statistics, Columbia University, ar4151@columbia.edu.}
  }
  
\maketitle \vspace{-1.5em}

\begin{abstract}
In optimal transport, quadratic regularization is a sparse alternative to entropic regularization: the solution measure tends to have small support. Computational experience suggests that the support decreases monotonically to the unregularized counterpart as the regularization parameter is relaxed. We find it useful to investigate this monotonicity more abstractly for linear programs over polytopes, regularized with the squared norm. Here, monotonicity can be stated as an invariance property of the curve mapping the regularization parameter to the solution: once the curve enters a face of the polytope, does it remain in that face forever? We show that this invariance is equivalent to a geometric property of the polytope, namely that each face contains the minimum norm point of its affine hull. Returning to the optimal transport problem and its associated Birkhoff polytope, we verify this property for low dimensions, but show that it fails for marginals with five or more point masses. As a consequence, the conjectured monotonicity of the support \emph{fails} in general, even if experiments suggest that monotonicity holds for many cost matrices. Separately, we apply our geometric point of view to a problem of Erd{\H o}s, namely to characterize the doubly stochastic matrices whose maximal trace equals their squared norm.
\end{abstract}

\vspace{1em}

{\small
\noindent \emph{Keywords} Linear Program, Quadratic Regularization, Optimal Transport, Sparsity

\noindent \emph{AMS 2020 Subject Classification}
49N10;  %
49N05;  %
90C25 %

}
\vspace{0em}

\section{Introduction}\label{intro}

Let $\cP\subset\R^d$ be a polytope and $c\in\R^{d}$. The regularized linear program
\begin{equation}\label{eq:EulerIntro} 
     x^{\delta}=\argmin_{x \in \cP: \, \| x \|\leq \delta} \langle c, x \rangle
\end{equation}
has a unique solution as long as $\delta\geq0$ belongs to a certain interval~$[\delta_{\min},\delta_{\max}]$, cf.\ \cref{Lemma:ContinuityIncreasing}, hence we can consider the curve $\delta\mapsto x^{\delta}\in\cP$ which travels across various faces of~$\cP$ as $\delta$ increases (i.e., as the regularizing constraint relaxes). We are interested in the following invariance property: once $\delta\mapsto x^{\delta}\in\cP$ enters a given face, does it ever leave that face? (Cf.\ Figure~\ref{fig:invariancePropertyExample}.) In the optimal transport applications that motivate our study (see below), this property corresponds to the \emph{monotonicity of the optimal support $\spt x^{\delta}$ wrt.\ the regularization strength.} (The support is defined as the set of locations with nonzero mass, $\spt x := \{i: \, x_{i}>0\}$ for $x=(x_{1},\dots,x_{d})\in\R^{d}$.) Our abstract result, \cref{th:main}, geometrically characterizes all polytopes such that the invariance property holds (for any cost~$c$). 
We show that this property holds for the set of probability measures (unit simplex), but fails for the set of transport plans (Birkhoff polytope) when the dimension $d\geq25$. As a consequence---which may be surprising given numerical experience---the optimal support in quadratically regularized optimal transport problems is \emph{not} always monotone. 

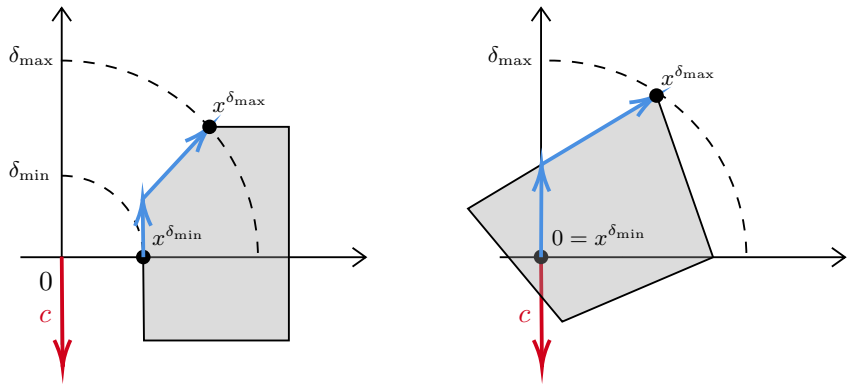
\begin{figure}
    \centering
    \resizebox{.75\linewidth}{!}{
    \tikzset{every picture/.style={line width=0.75pt}} %

    \begin{tikzpicture}[x=0.75pt,y=0.75pt,yscale=-1,xscale=1]
    \draw  (92.61,165.8) -- (275.77,165.8)(114.52,33.48) -- (114.52,212.29) (268.77,160.8) -- (275.77,165.8) -- (268.77,170.8) (109.52,40.48) -- (114.52,33.48) -- (119.52,40.48)  ;
    \draw  [fill={rgb, 255:red, 155; green, 155; blue, 155 }  ,fill opacity=0.35 ] (192.92,96.59) -- (235,96.59) -- (235,210) -- (158.14,210) -- (157.43,134.92) -- cycle ;
    \draw [color={rgb, 255:red, 208; green, 2; blue, 27 }  ,draw opacity=1 ][line width=1.5]    (114.52,165.48) -- (114.98,221) ;
    \draw [shift={(115,224)}, rotate = 269.53] [color={rgb, 255:red, 208; green, 2; blue, 27 }  ,draw opacity=1 ][line width=1.5]    (14.21,-4.28) .. controls (9.04,-1.82) and (4.3,-0.39) .. (0,0) .. controls (4.3,0.39) and (9.04,1.82) .. (14.21,4.28)   ;
    \draw  [draw opacity=0][dash pattern={on 4.5pt off 4.5pt}] (114.33,122.54) .. controls (114.33,122.54) and (114.33,122.54) .. (114.33,122.54) .. controls (138.22,122.43) and (157.68,141.72) .. (157.78,165.61) -- (114.52,165.8) -- cycle ; \draw  [dash pattern={on 4.5pt off 4.5pt}] (114.33,122.54) .. controls (114.33,122.54) and (114.33,122.54) .. (114.33,122.54) .. controls (138.22,122.43) and (157.68,141.72) .. (157.78,165.61) ;  
    \draw    (157.82,165.77) -- (157.78,165.8) ;
    \draw    (157.78,165.61) -- (157.78,165.8) ;
    \draw [shift={(157.78,165.71)}, rotate = 89.88] [color={rgb, 255:red, 0; green, 0; blue, 0 }  ][fill={rgb, 255:red, 0; green, 0; blue, 0 }  ][line width=0.75]      (0, 0) circle [x radius= 3.35, y radius= 3.35]   ;
    \draw [color={rgb, 255:red, 74; green, 144; blue, 226 }  ,draw opacity=1 ][line width=1.5]    (157.82,165.77) -- (157.47,137.92) ;
    \draw [shift={(157.43,134.92)}, rotate = 89.27] [color={rgb, 255:red, 74; green, 144; blue, 226 }  ,draw opacity=1 ][line width=1.5]    (14.21,-4.28) .. controls (9.04,-1.82) and (4.3,-0.39) .. (0,0) .. controls (4.3,0.39) and (9.04,1.82) .. (14.21,4.28)   ;
    \draw [color={rgb, 255:red, 74; green, 144; blue, 226 }  ,draw opacity=1 ][line width=1.5]    (157.43,134.92) -- (190.88,98.79) ;
    \draw [shift={(192.92,96.59)}, rotate = 132.8] [color={rgb, 255:red, 74; green, 144; blue, 226 }  ,draw opacity=1 ][line width=1.5]    (14.21,-4.28) .. controls (9.04,-1.82) and (4.3,-0.39) .. (0,0) .. controls (4.3,0.39) and (9.04,1.82) .. (14.21,4.28)   ;
    \draw  [draw opacity=0][dash pattern={on 4.5pt off 4.5pt}] (114.06,61.42) .. controls (171.54,61.17) and (218.33,107.55) .. (218.58,165.03) -- (114.52,165.48) -- cycle ; \draw  [dash pattern={on 4.5pt off 4.5pt}] (114.06,61.42) .. controls (171.54,61.17) and (218.33,107.55) .. (218.58,165.03) ;  
    \draw    (192.92,96.59) ;
    \draw [shift={(192.92,96.59)}, rotate = 0] [color={rgb, 255:red, 0; green, 0; blue, 0 }  ][fill={rgb, 255:red, 0; green, 0; blue, 0 }  ][line width=0.75]      (0, 0) circle [x radius= 3.35, y radius= 3.35]   ;

    \draw  (346.83,165.8) -- (530,165.8)(368.75,33.48) -- (368.75,212.29) (523,160.8) -- (530,165.8) -- (523,170.8) (363.75,40.48) -- (368.75,33.48) -- (373.75,40.48)  ;
    \draw [color={rgb, 255:red, 208; green, 2; blue, 27 }  ,draw opacity=1 ][line width=1.5]    (368.75,165.48) -- (369.2,221) ;
    \draw [shift={(369.23,224)}, rotate = 269.53] [color={rgb, 255:red, 208; green, 2; blue, 27 }  ,draw opacity=1 ][line width=1.5]    (14.21,-4.28) .. controls (9.04,-1.82) and (4.3,-0.39) .. (0,0) .. controls (4.3,0.39) and (9.04,1.82) .. (14.21,4.28)   ;
    \draw    (412.05,165.77) -- (412.01,165.8) ;
    \draw    (368.75,165.48) -- (368.75,165.8) ;
    \draw [shift={(368.75,165.64)}, rotate = 90] [color={rgb, 255:red, 0; green, 0; blue, 0 }  ][fill={rgb, 255:red, 0; green, 0; blue, 0 }  ][line width=0.75]      (0, 0) circle [x radius= 3.35, y radius= 3.35]   ;
    \draw  [draw opacity=0][dash pattern={on 4.5pt off 4.5pt}] (373.16,61.42) .. controls (430.63,61.17) and (477.42,107.55) .. (477.67,165.03) -- (373.61,165.48) -- cycle ; \draw  [dash pattern={on 4.5pt off 4.5pt}] (373.16,61.42) .. controls (430.63,61.17) and (477.42,107.55) .. (477.67,165.03) ;  
    \draw  [fill={rgb, 255:red, 155; green, 155; blue, 155 }  ,fill opacity=0.35 ] (430,80) -- (460,165.8) -- (380,200) -- (330,140) -- cycle ;
    \draw [color={rgb, 255:red, 74; green, 144; blue, 226 }  ,draw opacity=1 ][line width=1.5]    (368.75,165.8) -- (368.98,119.6) ;
    \draw [shift={(369,116.6)}, rotate = 90.29] [color={rgb, 255:red, 74; green, 144; blue, 226 }  ,draw opacity=1 ][line width=1.5]    (14.21,-4.28) .. controls (9.04,-1.82) and (4.3,-0.39) .. (0,0) .. controls (4.3,0.39) and (9.04,1.82) .. (14.21,4.28)   ;
    \draw [color={rgb, 255:red, 74; green, 144; blue, 226 }  ,draw opacity=1 ][line width=1.5]    (369,116.6) -- (427.43,81.54) ;
    \draw [shift={(430,80)}, rotate = 149.04] [color={rgb, 255:red, 74; green, 144; blue, 226 }  ,draw opacity=1 ][line width=1.5]    (14.21,-4.28) .. controls (9.04,-1.82) and (4.3,-0.39) .. (0,0) .. controls (4.3,0.39) and (9.04,1.82) .. (14.21,4.28)   ;
    \draw    (430,80) ;
    \draw [shift={(430,80)}, rotate = 0] [color={rgb, 255:red, 0; green, 0; blue, 0 }  ][fill={rgb, 255:red, 0; green, 0; blue, 0 }  ][line width=0.75]      (0, 0) circle [x radius= 3.35, y radius= 3.35]   ;

    \draw (101,172.4) node [anchor=north west][inner sep=0.75pt]    {$0$};
    \draw (101,192.4) node [anchor=north west][inner sep=0.75pt]    {$\textcolor[rgb]{0.82,0.01,0.11}{c}$};
    \draw (85,113.4) node [anchor=north west][inner sep=0.75pt]  [font=\footnotesize]  {$\delta_{\min}$};
    \draw (161,147.4) node [anchor=north west][inner sep=0.75pt]  [font=\footnotesize]  {$x^{\delta_{\min}}$};
    \draw (85,52.4) node [anchor=north west][inner sep=0.75pt]  [font=\footnotesize]  {$\delta_{\max}$};
    \draw (193,76.4) node [anchor=north west][inner sep=0.75pt]  [font=\footnotesize]  {$x^{\delta_{\max}}$};
    \draw (355.23,192.4) node [anchor=north west][inner sep=0.75pt]    {$\textcolor[rgb]{0.82,0.01,0.11}{c}$};
    \draw (373,147) node [anchor=north west][inner sep=0.75pt]  [font=\footnotesize]  {$0=x^{\delta_{\min}}$};
    \draw (339,52.4) node [anchor=north west][inner sep=0.75pt]  [font=\footnotesize]  {$\delta_{\max}$};
    \draw (431,62.4) node [anchor=north west][inner sep=0.75pt]  [font=\footnotesize]  {$x^{\delta_{\max}}$};

    \end{tikzpicture}
    }
    \caption{A non-monotone (left) and a monotone (right) polytope with cost $c$ (red). The curve $[\delta_{\min},\delta_{\max}]\ni \delta\mapsto x^{\delta}$ follows the blue arrows and has the invariance property only in the right example.}
    \label{fig:invariancePropertyExample}
\end{figure}

\subsection{Motivation}

This study is motivated by optimal transport  and related minimization problems over probability measures. In its simplest form, the transport problem between probability measures~$\mu$ and~$\nu$ is
\begin{equation}\tag{OT}\label{eq:OT}
\inf_{\gamma \in \Gamma(\mu, \nu)} \int \hat c(x,y) \, \gamma(dx,dy),
\end{equation}
where $\hat c(x,y)$ is a given ``cost'' function and $\Gamma(\mu, \nu)$ denotes the set of couplings; i.e., joint probability measures $\gamma$ with marginals $(\mu,\nu)$. See~\cite{Villani.09} for a detailed discussion. In many applications such as machine learning or statistics (see \cite{KolouriParlEtAl.17survey,PanaretosZemel.19} for surveys), the marginals encode observed data points ${X_1, \dots, X_N}$ and ${Y_1, \dots, Y_N}$ which are represented by their empirical measures $\mu=\frac{1}{N} \sum_{i} \delta_{X_i}$ and $\nu=\frac{1}{N} \sum_{i} \delta_{Y_i}$.   Denoting by $c_{ij}=\hat c(X_i,Y_j)$ the resulting $N\times N$ cost matrix, \eqref{eq:OT} then becomes a linear program 
\begin{align}\tag{LP}\label{eq:LP}
\inf_{x \in \cP} \langle c, x \rangle
\end{align} 
where the polytope $\cP$ is (up to a constant factor) the Birkhoff polytope of doubly stochastic matrices of size $N\times N$.   
For different choices of polytope, \eqref{eq:LP} includes other problems of recent interest, such as multi-marginal optimal transport and Wasserstein barycenters~\cite{AguehCarlier.11} or adapted Wasserstein distances~\cite{BackhoffBartlBeiglbockEder.20}. The optimal transport problem is computationally costly when $N$ is large. The impactful paper \cite{Cuturi.13}  proposed to regularize~\eqref{eq:OT} by penalizing with Kullback--Leibler divergence (entropy). Then, solutions can be computed using the Sinkhorn--Knopp algorithm, which has lead to an explosion of high-dimensional applications (see~\cite{PeyreCuturi.19}). More generally, \cite{DesseinPapadakisRouas.18} introduced regularized optimal transport with regularization by a divergence. Different divergences give rise to different properties of the solution. Entropic regularization always leads to couplings whose support contains all data pairs $(X_i,Y_j)$, even though~\eqref{eq:OT}  typically has a sparse solution. In some applications that is undesirable; for instance, it may correspond to blurrier images in an image processing task~\cite{blondel18quadratic}. The second-most prominent regularization is $\chi^{2}$-divergence or equivalently the squared norm, as proposed in \cite{blondel18quadratic},\footnote{See also \cite{EssidSolomon.18} for  a similar formulation of minimum-cost flow problems, and the predecessors referenced therein. Our model with a general polytope includes such problems, and many others.} which gives rise to sparse solutions. In the Eulerian formulation of \cite{DesseinPapadakisRouas.18}, this is exactly our problem~\eqref{eq:EulerIntro} with $\cP$ being the (scaled) Birkhoff polytope. Alternately, the problem can be stated in Lagrangian form as in \cite{blondel18quadratic}, making the squared-norm penalty explicit:
\begin{equation}\tag{QOT}\label{eq:QOTintro}
    \inf_{\gamma\in \Gamma(\mu, \nu)} \int \hat c (x,y)\, \gamma(dx,dy)+\frac{1}{2\eta}  \left\| \frac{d\gamma}{  d(\mu \otimes \nu) }\right\|_{L^2( \mu \otimes \nu)}^2
\end{equation} 
where $d\gamma/  d(\mu \otimes \nu) $ denotes the density of $\gamma$ wrt.\ the  product measure $\mu \otimes \nu$ and the regularization strength is now parameterized by $\eta\in[0,\infty]$. In the general setting, this corresponds to
\begin{align}\label{eq:LagrangeIntro}
    \inf_{x \in \cP} \langle c, x \rangle + \frac{1}{2\eta} \| x \|^2
\end{align} 
which has a unique solution $x_{\eta}$ for any $\eta\in(0,\infty)$. The curves $(x^{\eta})$ and $(x_{\eta})$ of solutions to~\eqref{eq:EulerIntro} and~\eqref{eq:LagrangeIntro}, respectively, coincide up to a simple reparametrization.

In optimal transport, the regularized problem is often solved to approximate the linear problem~\eqref{eq:OT}. The latter has a generically unique (see \cite{CuestaAlbertosTueroDiaz.93}) solution $x$, which is recovered as $\delta\to\delta_{\max}$ (or $\eta\to\infty$): $x=x^{\delta_{\max}}=x_{\infty}$.\footnote{By ``generic'' we mean that the set of $c$ where uniqueness fails is a Lebesgue nullset. For problems with non-unique solution, $x^{\delta_{\max}}$ recovers the minimum-norm solution.} In particular, the support of $x^{\delta}$ converges to the support of $x$, which is generically sparse ($N$ out of $N^{2}$ possible points, again by~\cite{CuestaAlbertosTueroDiaz.93}). In numerous experiments, it has been observed not only that $\spt x^{\delta}$ is sparse when~$\delta$ is large, but also that $\spt x^{\delta}$ monotonically decreases to $\spt x$ (e.g., \cite{blondel18quadratic}). If this monotonicity holds, then in particular $\spt x\subset \spt x^{\delta}$, meaning that $\spt x^{\delta}$ can be used as a multivariate confidence band for the (unknown) solution~$x$. 

\subsection{Summary of Results}

When $\cP$ is the unit simplex or the Birkhoff polytope, its elements can be interpreted as probability measures and thus carry a natural notion of support, $\spt x := \{i: \, x_{i}>0\}$ for $x=(x_{1},\dots,x_{d})\in\R^{d}$. We can then ask whether $\spt x^{\delta}$ is monotone wrt.\ the strength~$\delta$ of regularization. When $\cP$ is a general polytope, the set of vertices of the minimal face containing~$x$ yields a similar notion, replacing the concept of support. The question of monotonicity then becomes to the aforementioned invariance property: if $x^{\delta}\in F$ for some face~$F$, does it follow that $x^{\delta'}\in F$ for all $\delta'\geq\delta$? (This indeed specializes to the monotonicity of $\spt x^{\delta}$ when $\cP$ is the simplex or the Birkhoff polytope; cf.\ \cref{le:BirkhoffSptMonEquiv}.) The answer may of course depend on the cost~$c$; we call~$\cP$ monotone if the invariance holds for any $c\in\RR^{d}$.

We show that monotonicity can be characterized by the geometry of~$\cP$. Namely, monotonicity of~$\cP$ is equivalent to two properties: for any proper face~$F$, the minimum-norm point of the affine hull of~$F$ must lie in~$F$, and moreover, the minimum-norm point of~$\cP$ must lie in the relative interior $\relint\cP$. See \cref{th:main}. Once the right point of view is taken, the proof is quite elementary.  An example satisfying both properties, and hence monotonicity, is the unit simplex $\Delta\subset\R^{d}$, for any $d\geq1$. For that choice of polytope, $x^{\delta}$ is a sparse soft-min of $c=(c_{1},\dots,c_{d})$, converging to $\frac{1}{\#\argmin c} \; 1_{\{\argmin c\}}$ in the unregularized limit. On the other hand, we show that the Birkhoff polytope violates the first condition whenever the marginals have $N\geq 5$ data points (meaning that $d\geq 25$). As a result, the optimal support in quadratically regularized optimal transport problems is \emph{not} always monotone, even if it appears so in numerous experiments. In fact, we exhibit a particularly egregious failure of monotonicity where the support of the limiting (unregularized) optimal coupling~$x$ is not contained in $\spt x^{\delta}$ for some reasonably large~$\delta$. Our counterexample is constructed by hand, based on theoretical considerations. Our numerical experiments using random cost matrices have failed to locate counterexamples, suggesting that there are, in some sense, ``few'' faces violating our condition on the minimum-norm point. (See also the proof of \cref{le:BirkhoffMonotone}.) 

This is an interesting problem for further study in combinatorics, where the Birkhoff polytope has remained an active topic even after decades of research (e.g., \cite{Paffenholz.15} and the references therein). In that spirit, we also apply our point of view to a problem of Erd{\H o}s, namely to characterize the doubly stochastic $N\times N$ matrices $A$ satisfying $\|A\|^{2}= \maxtr A := \max_\sigma \sum_{i=1}^N a_{i,\sigma(i)}$
where $\sigma$ ranges over all permutations of $\{1,\dots,N\}$. We show in \cref{th:erd} that this is precisely the set of minimum-norm points of certain faces $F$ of the Birkhoff polytope. We use this geometric characterization to prove a conjecture of~\cite{BouthatMashreghiMorneauGuerin.24}, namely that such matrices must have rational entries.

The remainder of this note is organized as follows. \Cref{se:prelims} introduces the regularized linear program and its solution curve in detail. \Cref{se:main} contains the main abstract result, whereas \cref{se:applications} reports the applications to optimal transport and soft-min. We conclude in \cref{se:erd} with the application to Erd{\H o}s' problem about doubly stochastic matrices.

\section{Preliminaries}\label{se:prelims}

This section collects notation and well-known or straightforward results for ease of reference. Let $\langle  \cdot,\cdot \rangle$ be an inner product on $\RR^{d}$ and $\|\cdot\|$ the induced norm. Let $\emptyset\neq\cP \subseteq \RR^{d}$ be a polytope; i.e., the convex hull of finitely many points. The minimal set of such points, called vertices or extreme points, is denoted  $\ext \cP$. A face of $\cP$ is a subset $F\subset \cP$ such that any open line segment $L=(x,y)\subset\cP$ with $L\cap F\neq\emptyset$ satisfies $\overline L \subset F$, where $\overline L=[x,y]$ denotes closure. Alternately, a nonempty face~$F$ of $\cP$ is the intersection $F=\cP\cap H$ of~$\cP$ with a tangent hyperplane~$H$. Here a hyperplane $H=\{x\in \RR^d: \ \langle a, x \rangle=b\}$ is called tangent if $H\cap \cP\neq \emptyset$ and $ \cP\subset  \{x\in \RR^d: \ \langle a, x \rangle\leq b\}$.
We denote by $\relint K$ and $\rbd K = K\setminus \relint K$ the relative interior and relative boundary of a set~$K$, and by $\aff K$ its affine span. See \cite{Brondsted.83} for background on polytopes and related notions. 

Next, we introduce the regularized linear program and the interval of parameters $\delta$ where the set of feasible points is nonempty and the constraint is binding.

\begin{lemma}[Eulerian Formulation]\label{Lemma:ContinuityIncreasing}
Denote $\cP(\delta):=\{x\in\cP:\, \|x\|\leq\delta\}$ for $\delta\geq0$ and
$$
  \delta_{\min}:=\min\{\delta\geq0:\, \cP(\delta)\neq\emptyset\}, \qquad 
  \delta_{\max}:=\min\left\{\delta\geq0:\, \min_{x\in\cP(\delta)} \langle c,x\rangle = \min_{\ x\in\cP} \langle c,x\rangle\right\}.
$$
Then
$$
 \Dom := \left\{\delta>\delta_{\min}: \ \argmin_{x\in\cP(\delta)} \langle c,x\rangle\cap  \argmin_{\ x\in\cP} \langle c,x\rangle=\emptyset \right\} = (\delta_{\min},\delta_{\max}).
$$
For each $\delta\in \overline{\Dom }=[\delta_{\min},\delta_{\max}]$, the problem 
\begin{align}\label{eq:EulerianSol}
  \inf_{x\in\cP(\delta)} \langle c,x\rangle \qquad \text{has a unique minimizer $x^{\delta}$.}
\end{align}
Moreover, $[\delta_{\min},\delta_{\max}]\ni \delta\mapsto x^{\delta}$ is continuous and $\|x^{\delta}\|=\delta$.
In particular, $x^{\delta_{\min}}$ is the singleton $\cP(\delta_{\min})$, and $x^{\delta_{\max}}$ is the minimum-norm solution of $\min_{x\in\cP} \langle c,x\rangle$.
\end{lemma}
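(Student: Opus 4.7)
The plan is to reduce everything to two facts: (i) the value function $m(\delta):=\min_{x\in\cP(\delta)}\langle c,x\rangle$ is continuous and non-increasing, and (ii) the norm $\|\cdot\|$ is strictly convex. Throughout, write $m^\ast:=\min_{x\in\cP}\langle c,x\rangle$ for the unrestricted optimum. Since $\cP$ is nonempty and compact, so is each $\cP(\delta)$ whenever it is nonempty. The function $\delta\mapsto\cP(\delta)$ is non-decreasing in $\delta$; continuity of~$m$ on $[\delta_{\min},\infty)$ follows from a direct sandwich argument (approximating from below by $\cP(\delta')$ with $\delta'\nearrow\delta$, and from above by the same set).

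First I would establish that $\delta_{\min}=\min_{x\in\cP}\|x\|$ is attained at a unique point, which I name $x^{\delta_{\min}}$; uniqueness holds because $\|\cdot\|^2$ is strictly convex on the convex set~$\cP$. This already handles the endpoint $\delta=\delta_{\min}$. Next, $\delta_{\max}$ is finite because any minimizer of the unrestricted LP over~$\cP$ has finite norm, and $m(\delta)=m^\ast$ for all $\delta$ large enough. The identification $\Dom=(\delta_{\min},\delta_{\max})$ then follows from the observation that for $\delta>\delta_{\min}$, the intersection of restricted and unrestricted minimizers is nonempty precisely when $m(\delta)=m^\ast$; by monotonicity of~$m$ and the definition of $\delta_{\max}$, this holds iff $\delta\geq\delta_{\max}$.

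The heart of the proof is uniqueness of $x^{\delta}$ on $[\delta_{\min},\delta_{\max}]$, together with $\|x^{\delta}\|=\delta$. For $\delta\in(\delta_{\min},\delta_{\max})$ I would argue first that any minimizer $x$ satisfies $\|x\|=\delta$: otherwise $\|x\|<\delta$ and, choosing any $y\in\cP$ with $\langle c,y\rangle=m^\ast<m(\delta)$, the segment $x_t=(1-t)x+ty$ stays in~$\cP$, has norm strictly less than $\delta$ for small $t>0$ by continuity, and strictly reduces the objective, contradicting optimality. Uniqueness on the sphere then follows from strict convexity: two distinct minimizers $x_1,x_2$ with $\|x_i\|=\delta$ would produce a midpoint $z=(x_1+x_2)/2\in\cP(\delta)$ with $\|z\|<\delta$ and the same objective value, contradicting the previous step. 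At $\delta=\delta_{\max}$ I run essentially the same midpoint argument: if two minimizers existed, the midpoint would lie in~$\cP$, have the optimal value $m^\ast$, and strictly smaller norm, contradicting the minimality of $\delta_{\max}$. This simultaneously shows $\|x^{\delta_{\max}}\|=\delta_{\max}$ and that $x^{\delta_{\max}}$ is the minimum-norm solution of the unrestricted LP, because any other unrestricted minimizer has norm $\geq\delta_{\max}$ by definition.

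Finally, continuity of $\delta\mapsto x^{\delta}$ is a standard compactness--uniqueness argument. Given $\delta_n\to\delta$, extract a subsequence with $x^{\delta_n}\to\bar x\in\cP$, $\|\bar x\|\leq\delta$. To show $\bar x$ is optimal on $\cP(\delta)$, test against arbitrary $y\in\cP(\delta)$: if $\|y\|<\delta$ then $y\in\cP(\delta_n)$ for large $n$, so $\langle c,x^{\delta_n}\rangle\leq\langle c,y\rangle$ passes to the limit; if $\|y\|=\delta>\delta_{\min}$, approximate $y$ by $y_k=(1-1/k)y+(1/k)x^{\delta_{\min}}$, which has $\|y_k\|<\delta$, and conclude by continuity of $\langle c,\cdot\rangle$. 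Uniqueness of $x^{\delta}$ forces the whole sequence to converge. The main obstacle, to the extent there is one, is bookkeeping around the boundary: the norm constraint may fail to be active at the endpoints of $\Dom$ in the restricted problem, so the identifications at $\delta_{\min}$ and $\delta_{\max}$ must be handled separately from the open interval, using the strict convexity of $\|\cdot\|$ each time.
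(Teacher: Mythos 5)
Your proof is correct and follows essentially the same route as the paper's: show the norm constraint is active by moving a restricted minimizer towards a global minimizer, invoke strict convexity of the norm for uniqueness of the minimizer on the sphere, and handle the two endpoints $\delta_{\min}$ and $\delta_{\max}$ separately. The only substantive difference is that you spell out the continuity argument in detail (compactness--uniqueness subsequence extraction plus interior approximation of boundary points via a convex combination with $x^{\delta_{\min}}$), whereas the paper simply declares continuity to be straightforward.
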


\begin{proof}
Compactness of $\cP$ implies that the minima defining $\delta_{\min}$ and $\delta_{\max}$ are indeed attained. Moreover, the identity $\Dom =(\delta_{\min},\delta_{\max})$ follows directly from the definition of $\delta_{\max}$.

Fix $\delta\in \Dom $. Let $x^{\delta}\in \argmin_{x\in\cP(\delta)} \langle c,x\rangle $ and $x^{*}\in \argmin_{x\in\cP} \langle c,x\rangle$. If $\|x^{\delta}\|<\delta$, then $x:=\lambda x^{\delta} + (1-\lambda) x^{*}\in \cP(\delta)$ for sufficiently large $\lambda\in(0,1)$. By optimality it follows that $x^{\delta}\in \argmin_{x\in\cP} \langle c,x\rangle $, meaning that $\delta\notin\Dom $. Thus, for $\delta\in\Dom $, we have $\|x^{\delta}\|=\delta$. In particular, the set $\argmin_{x\in\cP(\delta)} \langle c,x\rangle$ is contained in the sphere $\{x:\,\|x\|=\delta\}$. As the set is also convex, it must be a singleton by the strict convexity of $\|\cdot\|$. Continuity of $\delta\mapsto x^{\delta}$ is straightforward.

It remains to deal with the boundary cases. For $\delta=\delta_{\min}$, we note that $\|x^{\delta}\|<\delta$ is trivially ruled out, hence we can conclude as above. Clearly $\{x^{\delta_{\min}}\}= \cP(\delta_{\min})$. Define $x^{*}$ as the minimum-norm solution of $\min_{x\in\cP} \langle c,x\rangle$, which is unique by strict convexity. Clearly $\|x^{*}\|=\delta_{\max}$. Thus when $\delta=\delta_{\max}$, we must have $x^{\delta}=x^{*}$ for any $x^{\delta}\in \argmin_{x\in\cP(\delta)} \langle c,x\rangle$.
Continuity and $\|x^{\delta}\|=\delta$ are again straightforward.
\end{proof}

The next lemma recalls the standard projection theorem (e.g., \cite[Theorem~5.2]{Brezis.11}).

\begin{lemma}[Projection]
    \label{Lemma:projset}
     Let $\emptyset\neq K \subseteq \RR^d$ be closed and convex. Given $x \in \RR^d$, there exists a unique $x_{K} \in K$, called the projection of $x$ onto $K$ and denoted $x_{K}=\proj_{K}(x)$, such that
    \begin{align*}
        \|x - x_{K}\| = \inf_{x'\in K} \|x - x'\|.
    \end{align*}
    Moreover, $x_{K}$ is characterized within~$K$ by 
    \begin{align}\label{projineq}
        \langle x - x_{K}, x' - x_{K} \rangle \leq 0 \qquad \text{for all } x'\in K.
    \end{align}
   If $x_{K}\in \relint K$, then $x_{K}=\proj_{\aff K}(x)$ and~\eqref{projineq} can be sharpened to
    \begin{align}\label{projeq}
        \langle x - x_{K}, x' - x_{K} \rangle = 0 \qquad \text{for all } x'\in K.
    \end{align}
    In particular, \eqref{projeq} holds when $K$ is an affine subspace. In that case, $x\mapsto\proj_{K}(x)$ is affine.
\end{lemma}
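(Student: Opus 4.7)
The plan is to follow the classical proof of the projection theorem, where the essential ingredient is the strict convexity of $\|\cdot\|^{2}$. For existence, take a minimizing sequence $(x_{n})\subset K$: it is bounded, hence has a convergent subsequence, whose limit lies in $K$ by closedness and attains the infimum by continuity of the norm. For uniqueness, if $x_{K}$ and $y_{K}$ both achieved the minimum $\rho:=\inf_{x'\in K}\|x-x'\|$, the midpoint $(x_{K}+y_{K})/2$ would lie in $K$ by convexity, and the parallelogram identity
\[
  \Bigl\| x - \tfrac{x_{K}+y_{K}}{2}\Bigr\|^{2} + \tfrac{1}{4}\|x_{K}-y_{K}\|^{2} = \tfrac{1}{2}\|x-x_{K}\|^{2} + \tfrac{1}{2}\|x-y_{K}\|^{2} = \rho^{2}
\]
would force $x_{K}=y_{K}$.

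For the variational characterization \eqref{projineq}, I would use the standard perturbation device. For $x'\in K$ and $t\in[0,1]$, convexity gives $x_{K}+t(x'-x_{K})\in K$, so the function
\[
  \phi(t)=\|x-x_{K}-t(x'-x_{K})\|^{2}=\|x-x_{K}\|^{2}-2t\langle x-x_{K},x'-x_{K}\rangle+t^{2}\|x'-x_{K}\|^{2}
\]
is minimized at $t=0$, yielding $\phi'(0^{+})=-2\langle x-x_{K},x'-x_{K}\rangle\geq0$, which is exactly \eqref{projineq}. Conversely, if \eqref{projineq} holds, expanding $\|x-x'\|^{2}=\|(x-x_{K})-(x'-x_{K})\|^{2}$ gives $\|x-x'\|^{2}\geq\|x-x_{K}\|^{2}$ for every $x'\in K$, so $x_{K}$ is optimal.

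For the relative-interior refinement, suppose $x_{K}\in\relint K$ and fix $x'\in K$. By definition of $\relint K$, there exists $\eps>0$ with $x_{K}-\eps(x'-x_{K})\in K$. Applying \eqref{projineq} to this point yields $\langle x-x_{K},x'-x_{K}\rangle\geq0$, combining with \eqref{projineq} to give \eqref{projeq}. To identify $x_{K}$ with $\proj_{\aff K}(x)$, observe that every $y\in\aff K$ can be written as $y=x_{K}+\sum_{i}\lambda_{i}(x'_{i}-x_{K})$ with $x'_{i}\in K$ and $\lambda_{i}\in\RR$; then \eqref{projeq} extends by linearity to $\langle x-x_{K},y-x_{K}\rangle=0$ for all $y\in\aff K$, which is the variational characterization of the orthogonal projection onto the affine subspace $\aff K$.

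Finally, when $K$ itself is affine we have $\relint K=K$, so \eqref{projeq} holds everywhere; affinity of $\proj_{K}$ follows by translating to the origin: for any fixed $x_{0}\in K$, the map $v\mapsto\proj_{K}(v+x_{0})-x_{0}$ is the orthogonal projection onto the linear subspace $K-x_{0}$, characterized by the linear condition of being the unique element of $K-x_{0}$ whose difference with $v$ is orthogonal to $K-x_{0}$, hence linear in $v$. None of these steps presents a real obstacle; the only moment that requires a little care is the passage from the variational equality on $K$ to the identification with $\proj_{\aff K}$, which rests on the observation that $\aff K-x_{K}$ is the linear span of $K-x_{K}$.
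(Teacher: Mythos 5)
Your proof is correct. The paper does not actually prove this lemma—it simply cites Brezis (Theorem~5.2) as a standard reference—so there is no ``paper's proof'' to compare against. Your argument is exactly the textbook proof: existence via a minimizing sequence and compactness, uniqueness via the parallelogram law, the variational inequality~\eqref{projineq} via the one-sided derivative of $t\mapsto\|x-x_K-t(x'-x_K)\|^2$ at $t=0^+$, and the converse via expansion of $\|x-x'\|^2$. The relative-interior refinement is also handled correctly: the only step worth flagging is that ``$x_K\in\relint K$ implies $x_K-\eps(x'-x_K)\in K$ for some $\eps>0$'' is a characterization of the relative interior of a convex set (Rockafellar, Theorem~6.4) rather than the literal definition, but it is standard and exactly what is needed. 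The extension to $\aff K$ via affine combinations and the translation argument for affinity of $\proj_K$ on an affine subspace are both correct. No gaps.
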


Below, we often use $\proj_{K}(0)$ as a convenient notation for $\argmin_{x\in K} \|x\|$, the minimum-norm point of~$K$.
In the Lagrangian formulation of our regularized linear program, the solution can be expressed as a projection onto~$\cP$ as follows.

\begin{lemma}[Lagrangian Formulation]\label{le:Lagrange}
  Define 
  \begin{align}\label{eq:xetaProj}
     x_{\eta} := \proj_{\cP}(-\eta c), \quad \eta\in[0,\infty), \qquad x_{\infty}:=\lim_{\eta\to\infty} x_{\eta}. 
  \end{align} 
  Then 
  \begin{align}
     x_{\eta} &= \argmin_{x\in\cP} \, \langle c,x \rangle + \frac{1}{2\eta}\|x\|^{2} , \quad \eta \in (0,\infty), \label{eq:xeta2} \\
	x_{0} &= \argmin_{x\in\cP} \|x\|,  \qquad 
	x_{\infty} = \argmin_{x'\in\argmin_{x\in\cP} \langle c,x \rangle} \|x'\|. \label{eq:xeta3}
  \end{align}
  The limit $x_{\eta}\to x_{\infty}$ is stationary; i.e., there exists $\bar\eta\in\RR_{+}$ such that $x_{\eta}=x_{\infty}$ for all $\eta\geq \bar\eta$.
\end{lemma}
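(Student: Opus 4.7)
The identities \eqref{eq:xeta2} and $x_{0}=\argmin_{x\in\cP}\|x\|$ are immediate from the definition of $\proj_{\cP}$. Expanding $\|x-(-\eta c)\|^{2}=\|x\|^{2}+2\eta\langle c,x\rangle+\eta^{2}\|c\|^{2}$ and discarding the constant $\eta^{2}\|c\|^{2}$ shows that $\proj_{\cP}(-\eta c)$ minimizes $2\eta\langle c,x\rangle+\|x\|^{2}$ over $\cP$; dividing by $2\eta>0$ yields \eqref{eq:xeta2}. The case $\eta=0$ is just $\proj_{\cP}(0)=\argmin_{x\in\cP}\|x\|$ from \cref{Lemma:projset}.

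The main work is the stationary convergence, which simultaneously proves existence of $x_{\infty}$ and its characterization in \eqref{eq:xeta3}. Let $F^{*}:=\argmin_{x\in\cP}\langle c,x\rangle$ (a non-empty face of $\cP$) and $x^{*}:=\argmin_{x'\in F^{*}}\|x'\|$, unique by strict convexity of $\|\cdot\|$. Applying the projection characterization \eqref{projineq} with $K=\cP$ to the point $-\eta c$, the equality $x_{\eta}=x^{*}$ is equivalent to
\begin{equation}\label{eq:planstar}
\eta\langle c,y-x^{*}\rangle+\langle x^{*},y-x^{*}\rangle\geq 0 \qquad \text{for all } y\in\cP.
\end{equation}
Thus the goal reduces to exhibiting $\bar\eta<\infty$ for which \eqref{eq:planstar} holds uniformly in $y\in\cP$ whenever $\eta\geq\bar\eta$.

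The main obstacle, and essentially the only non-trivial step, is to convert polyhedrality of $\cP$ into such a uniform lower bound. Split the vertex set as $\ext(\cP)=V^{*}\sqcup V'$ with $V^{*}:=\ext(\cP)\cap F^{*}$, and set $\alpha:=\min_{v\in V'}\langle c,v-x^{*}\rangle>0$ (if $V'=\emptyset$, then $\cP=F^{*}$ and $x_{\eta}=x^{*}$ trivially for every $\eta>0$). Writing $y\in\cP$ as a vertex convex combination and letting $\mu$ denote the total weight placed on $V'$, linearity of $\langle c,\cdot\rangle$ gives $\langle c,y-x^{*}\rangle\geq\alpha\mu$. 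For the cross term, decompose $y=(1-\mu)y'+\mu y''$ with $y'\in F^{*}$ and $y''\in\cP$ (handling the degenerate cases $\mu\in\{0,1\}$ separately): the projection inequality \eqref{projineq} applied within $K=F^{*}$ yields $\langle x^{*},y'-x^{*}\rangle\geq 0$, while Cauchy--Schwarz combined with $\|y''-x^{*}\|\leq\diam(\cP)=:D$ produces $\langle x^{*},y-x^{*}\rangle\geq -\mu\|x^{*}\|D$. Hence \eqref{eq:planstar} holds for $\bar\eta:=\|x^{*}\|D/\alpha$, establishing stationarity. The limit $x_{\infty}$ then exists and equals $x^{*}$, proving \eqref{eq:xeta3}.
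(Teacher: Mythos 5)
Your handling of \eqref{eq:xeta2} and of $x_0$ is identical to the paper's (complete-the-square, then observe that $\proj_{\cP}(0)$ is the minimum-norm point). The genuine difference is in the last assertion: the paper does not prove the stationary convergence at all, deferring to \cite[Theorem~2.1]{Mangasarian.84} (and to \cite{GonzalezSanzNutz.24} for the explicit threshold), whereas you give a short self-contained argument. Your argument is correct. The reduction of ``$x_\eta=x^*$'' to the variational inequality \eqref{eq:planstar} via \eqref{projineq} is exactly right; splitting the vertices into $V^*=\ext(\cP)\cap F^*$ and $V'$, using $\alpha:=\min_{v\in V'}\langle c,v-x^*\rangle>0$ (which uses that $F^*$ is a face, hence determined by the vertices it contains) to get $\langle c,y-x^*\rangle\ge\alpha\mu$, decomposing $y=(1-\mu)y'+\mu y''$ with $y'\in F^*$ to exploit $\langle x^*,y'-x^*\rangle\ge 0$, and bounding the $y''$-term by $-\mu\|x^*\|\diam(\cP)$ all check out. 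The cancellation of the common factor $\mu$ is what makes the threshold $\bar\eta=\|x^*\|\diam(\cP)/\alpha$ work uniformly in $y$. The two degenerate cases $\mu\in\{0,1\}$ are genuinely fine as you indicate ($\mu=0$ gives $y\in F^*$ so \eqref{eq:planstar} reduces to $\langle x^*,y-x^*\rangle\ge0$; $\mu=1$ is covered by the same estimate with $y=y''$). Net effect: you supply an elementary proof of a fact the paper treats as known, which arguably makes the lemma self-contained; the paper's reference to \cite{GonzalezSanzNutz.24} is motivated by wanting the sharp threshold, which your cruder $\bar\eta$ does not give, but the lemma as stated does not need the sharp value.
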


\begin{proof}
  Let $\eta\in(0,\infty)$. Then~\eqref{eq:xeta2} follows from
  $$
    \frac{1}{2\eta}\|-\eta c - x\|^{2} = \frac{\eta}{2} \|c\|^{2} + \langle c,x \rangle + \frac{1}{2\eta}\|x\|^{2}.
  $$
  The first claim in~\eqref{eq:xeta3} is trivial. For the second claim and the stationary convergence $x_{\eta}\to x_{\infty}$ (which will not be used directly), see \cite[Theorem~2.1]{Mangasarian.84}, or \cite{GonzalezSanzNutz.24} for the exact threshold~$\bar{\eta}$.
\end{proof} 

The algorithm of~\cite{HagerZhang.16} solves the problem of projecting a point onto a polyhedron, hence can be used to find~$x_{\eta}=\proj_{\cP}(-\eta c)$ numerically. The solutions $x^{\delta}$ of the Eulerian formulation~\eqref{eq:EulerianSol} and $x_{\eta}$ of the Lagrangian formulation~\eqref{eq:xetaProj} are related as follows.

\begin{lemma}[Euler $\leftrightarrow$ Lagrange]\label{le:EulerLagrange}
  Given $\eta\in[0,\infty]$, there exists a unique $\delta=\delta(\eta)\in\overline{\Dom }$ such that $x_{\eta}=x^{\delta}$, namely $\delta=\|x_{\eta}\|$. The function $\eta\mapsto\delta(\eta)$ is nondecreasing.

  Conversely, let $\delta\in\overline{\Dom }$. Then there exists $\eta\in[0,\infty)$ (possibly non-unique) such that $x_{\eta}=x^{\delta}$. Define $\eta(\delta)$ as the minimal $\eta$ with $x_{\eta}=x^{\delta}$.\footnote{This is merely for concreteness; any choice of selector will do.} Then $\delta\mapsto \eta(\delta)$ is strictly increasing on $\overline{\Dom }$.
\end{lemma}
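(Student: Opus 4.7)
The plan is to first establish the forward direction via monotonicity and continuity of $\eta\mapsto x_\eta$, and then to obtain the converse via the intermediate value theorem.

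For the forward direction, I would start by showing that $\eta\mapsto\|x_\eta\|$ is continuous and nondecreasing on $[0,\infty)$. Continuity is immediate from $x_\eta=\proj_\cP(-\eta c)$ and the fact that projection onto a convex set is $1$-Lipschitz, so $\|x_\eta-x_{\eta'}\|\leq|\eta-\eta'|\|c\|$. Monotonicity is the standard Tikhonov ``swap inequality'': for $0<\eta<\eta'$, adding the optimality of $x_\eta$ against $x_{\eta'}$ for the $\eta$-objective and vice versa gives $(1/\eta-1/\eta')(\|x_\eta\|^2-\|x_{\eta'}\|^2)\leq 0$, so $\|x_\eta\|\leq\|x_{\eta'}\|$. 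Combined with $\|x_0\|=\delta_{\min}$ and $\|x_\infty\|=\delta_{\max}$ (from \cref{le:Lagrange} and the definitions in \cref{Lemma:ContinuityIncreasing}), together with the stationarity of $x_\eta\to x_\infty$, this shows that $\delta(\eta):=\|x_\eta\|$ takes values in $\overline{\Dom}=[\delta_{\min},\delta_{\max}]$ and is nondecreasing.

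Next, for fixed $\eta\in[0,\infty]$ and $\delta:=\|x_\eta\|$, I would check that $x_\eta=x^\delta$ by verifying that $x_\eta$ solves the Eulerian problem at level $\delta$: it is feasible since $\|x_\eta\|=\delta$, and for any $x'\in\cP$ with $\|x'\|\leq\delta=\|x_\eta\|$, the Lagrangian optimality $\langle c,x_\eta\rangle+\tfrac{1}{2\eta}\|x_\eta\|^2\leq\langle c,x'\rangle+\tfrac{1}{2\eta}\|x'\|^2$ combined with $\|x'\|^2\leq\|x_\eta\|^2$ yields $\langle c,x_\eta\rangle\leq\langle c,x'\rangle$. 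Uniqueness of $x^\delta$ from \cref{Lemma:ContinuityIncreasing} identifies the two. The boundary cases $\eta=0$ and $\eta=\infty$ match $\delta_{\min}$ and $\delta_{\max}$ directly from \cref{le:Lagrange} and \cref{Lemma:ContinuityIncreasing}. This establishes the first paragraph of the lemma.

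For the converse, given any $\delta\in\overline{\Dom}$, the continuous map $\eta\mapsto\|x_\eta\|$ on $[0,\infty)$ takes the value $\delta_{\min}$ at $\eta=0$ and the value $\delta_{\max}$ eventually (by stationarity), so the intermediate value theorem produces some $\eta\in[0,\infty)$ with $\|x_\eta\|=\delta$; the forward step then gives $x_\eta=x^\delta$. The set of such $\eta$ is closed (as the preimage of a single point under a continuous map), so $\eta(\delta):=\min\{\eta:x_\eta=x^\delta\}$ is well-defined. Strict monotonicity of $\delta\mapsto\eta(\delta)$ is immediate: for $\delta_1<\delta_2$, if one had $\eta(\delta_1)\geq\eta(\delta_2)$, then by monotonicity of $\eta\mapsto\|x_\eta\|$, $\delta_1=\|x_{\eta(\delta_1)}\|\geq\|x_{\eta(\delta_2)}\|=\delta_2$, a contradiction.

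I do not foresee a genuine obstacle: the argument rests on the $1$-Lipschitz projection, the Tikhonov swap inequality, and uniqueness of the Eulerian solution already granted by \cref{Lemma:ContinuityIncreasing}. The only bookkeeping worth care is the treatment of the extremes $\eta=0,\infty$ and the possibility that $\eta\mapsto x_\eta$ is locally constant, which is handled by taking the minimum in the definition of $\eta(\delta)$.
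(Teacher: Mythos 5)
Your proposal is correct and follows essentially the same route as the paper: establish continuity and monotonicity of $\eta\mapsto\|x_\eta\|$, use the intermediate value theorem for existence of $\eta(\delta)$, and verify $x_\eta=x^{\|x_\eta\|}$ via the Lagrangian optimality condition~\eqref{eq:xeta2}. You simply fill in details that the paper leaves implicit or treats as well-known (the $1$-Lipschitz projection bound for continuity, the Tikhonov swap inequality for monotonicity of $\|x_\eta\|$, and the short contradiction argument for strict increase of $\eta(\delta)$), and you order the two directions differently, but there is no substantive difference in approach.
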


\begin{proof}
We first prove the second part. Let $\delta\in\overline{\Dom}$ and recall that $\|x^{\delta}\|=\delta$. Note also that $\eta\mapsto x_{\eta}$ is continuous and that $\eta\mapsto \|x_{\eta}\|$ is nondecreasing, with range $[\|x_{0}\|, \|x_{\infty}\|] = \overline{\Dom }$. Hence there exists $\eta=\eta(\delta)$ such that $\|x_{\eta}\|=\delta$. We see from~\eqref{eq:xeta2} that $x_{\eta}$ minimizes $\langle c,x \rangle$ among all $x\in\cP(\delta)$, which is to say that $x_{\eta}=x^{\delta}$. 

To see the first part, fix $\eta\in[0,\infty]$ and define $\delta(\eta):=\|x_{\eta}\|$. The above shows that $x_{\eta}=x^{\delta(\eta)}$. Moreover, $\delta(\eta)$ must be unique with that property, due to $\|x^{\delta}\|=\delta$.
\end{proof}

\section{Abstract Result}\label{se:main}

Recall that $x^{\delta}$ and $x_{\eta}$ denote the solutions of the Eulerian~\eqref{eq:EulerianSol} and Lagrangian~\eqref{eq:xetaProj} formulation, respectively. \Cref{le:EulerLagrange} shows that the curves $(x^{\delta})_{\delta_{\min}\leq \delta\leq \delta_{\max}}$ and $(x_{\eta})_{\eta\geq0}$ are reparametrizations of one another. In particular, they trace out the same trajectory, and only the trajectory matters for the subsequent definition.

\begin{definition}\label{de:monotone}
   Let $\cP\subset\RR^{d}$ be a polytope and $c\in\RR^{d}$. We say that $\cP$ is \emph{$c$-monotone} if for %
   any face $F$ of $\cP$,
   \begin{align}
     x^{\delta}\in F \quad&\implies\quad x^{\delta'}\in F \quad \text{for all }\delta' \geq \delta\text{ in } [\delta_{\min},\delta_{\max}], \qquad \text{or equivalently if} \label{eq:monotone}\\
     x_{\eta} \in F \quad&\implies\quad x_{\eta'}\in F \quad \text{for all }\eta' \geq \eta\text{ in } [0,\infty]. \label{eq:monotoneLagrange}
   \end{align}   
   We say that $\cP$ is \emph{monotone} if it is $c$-monotone for all $c\in\RR^{d}$.
\end{definition}

This definition means that the ``support'' of $x^{\delta}$ is monotonically decreasing in~$\delta$, in the following sense. For any $x\in\cP$, there is a unique face $F=F(x)$ such that  $x\in\relint F$; moreover, $x\in\relint F$ if and only if $x$ is a convex combination of the vertices $\ext F$ with strictly positive weights \cite[Exercise 3.1, Theorem~5.6]{Brondsted.83}. Thus $\ext F(x)$ is a notion of support for~$x$, %
and then the property~\eqref{eq:monotone} indeed means that the support of $x^{\delta}$ is monotonically decreasing for inclusion. When $\cP$ is the unit simplex $\{x: x_i\geq0, \,\sum_i x_i=1\}$, then $\ext F(x)$ boils down to the usual measure-theoretic notion of support, namely $\{i:\, x_{i}>0\}$ for $x=(x_{1},\dots,x_{d})\in\cP$. This identification breaks down when~$\cP$ is the Birkhoff polytope, but monotonicity is nevertheless equivalent for the two notions of support (see \cref{le:BirkhoffSptMonEquiv}).\footnote{Clearly this equivalence does not hold in general. E.g., when $\cP\subset(0,1)^{d}$, all points have the same measure-theoretic support.} 
Next, we characterize monotonicity in geometric terms.

\begin{theorem}\label{th:main}
  A polytope $\cP$ is monotone (\cref{de:monotone}) if and only if
	\begin{myenum}
    \item\label{it:cond1}  $\proj_{\cP}(0)\in \relint \cP $ and
    \item \label{it:cond2} $\proj_{\aff F}(0)\in F$ for each face $\emptyset\neq F\subset\cP$.
  \end{myenum}  
\end{theorem}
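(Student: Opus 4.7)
My plan is to prove sufficiency and necessity separately, working throughout with the Lagrangian formulation $x_\eta = \proj_\cP(-\eta c)$ (equivalent by \Cref{le:EulerLagrange}).

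For sufficiency, I would argue by contradiction. Supposing monotonicity fails for some $c$, there is a proper face $F \neq \cP$ and $\eta < \eta'$ with $x_\eta \in F$ and $x_{\eta'} \notin F$. Defining $\eta^* := \sup\{\lambda \in [\eta,\eta'] : x_\lambda \in F\}$, continuity gives $x_{\eta^*} \in F$, and finiteness of the face lattice furnishes some $\epsilon > 0$ and a face $G \not\subseteq F$ with $x_\lambda \in \relint G$ on $(\eta^*, \eta^*+\epsilon)$. The key observation is that the orthogonality in \Cref{Lemma:projset} applied at $x_\lambda \in \relint G \subset \cP$ forces $x_\lambda = \proj_{\aff G}(-\lambda c)$, which is the affine function $L(\lambda) := p_G - \lambda w_G$ with $p_G := \proj_{\aff G}(0)$ and $w_G := \proj_{V_G}(c)$, where $V_G$ is the linear subspace parallel to $\aff G$. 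Now $S := \{\lambda \in \RR : L(\lambda) \in G\}$ is a closed interval. By hypothesis~\ref{it:cond2} applied to $G$, $L(0) = p_G \in G$, so $0 \in S$; and since $x_{\eta^*} \in F \cap G \subsetneq G$ lies in $\rbd G$ while $L(\lambda) \in \relint G$ for $\lambda$ just above $\eta^*$, $\eta^*$ is the \emph{left} endpoint of $S$, forcing $\eta^* \leq 0$. But hypothesis~\ref{it:cond1} gives $x_0 \in \relint\cP$, so $x_0 \notin F$ (as $F \neq \cP$) and hence $\eta^* > 0$: contradiction.

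For necessity of~\ref{it:cond1}: if $\proj_\cP(0) \in \rbd\cP$ lies in a proper face $F$, any cost $c$ whose unique LP minimizer is a vertex $v \in \ext\cP \setminus F$ breaks monotonicity immediately, since $x_0 \in F$ while $x_\infty = v \notin F$. For necessity of~\ref{it:cond2}, given a face $F$ with $p_F := \proj_{\aff F}(0) \notin F$, I would reverse the sufficiency argument: choose $w \in V_F$ so the affine line $\lambda \mapsto p_F - \lambda w$ enters $\relint F$ at some $\lambda_0 > 0$ (possible since $p_F \notin F$ but the line can be aimed through any point of $\relint F$), set $c = w + c^\perp$, and pick $c^\perp \in V_F^\perp$ so that the residual $-p_F - \lambda c^\perp$ lies in the normal cone $N_\cP(F)$ on a neighborhood of $\lambda_0$. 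The resulting trajectory then coincides with $L$ on this neighborhood, so $x_\lambda$ enters $\relint F$ at $\lambda_0 > 0$ from a proper sub-face of $F$, violating monotonicity at that sub-face.

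The hardest part will be the last step of necessity of~\ref{it:cond2}: verifying the existence of a valid $c^\perp$. This reduces to a linear-inequality feasibility problem in $V_F^\perp$, which should be tractable because $N_\cP(F)$ is a nontrivial convex cone, but will require some care in handling degenerate cases (e.g.\ when $N_\cP(F)$ is not full-dimensional in $V_F^\perp$). Once this construction is secured, both directions of the theorem follow.
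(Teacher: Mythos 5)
Your argument is correct and, for the sufficiency direction, is essentially the paper's: both work with $x_\eta=\proj_\cP(-\eta c)$, exploit the piecewise-affine structure of $\eta\mapsto x_\eta$ (the content of \cref{le:affine}), and focus on the moment the curve crosses from a face into $\relint G$ for a new face $G$. Your observation that hypothesis~\labelcref{it:cond2} places $0$ in the parameter interval $S=\{\lambda:L(\lambda)\in G\}$, while hypothesis~\labelcref{it:cond1} forces the left endpoint $\eta^*$ of that interval to be strictly positive, is exactly the dichotomy that the paper distributes across \cref{le:boundaryEntrance} and \cref{le:leavingFace}; your packaging is a little more direct but the mechanism is identical.

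For necessity you take a genuinely different route. For \labelcref{it:cond1} you pick any $c$ whose unique minimizer over $\cP$ is a vertex outside the face containing $\proj_\cP(0)$ and observe $x_0\in F$, $x_\infty\notin F$; the paper instead works in the Eulerian picture with $c=a$, the outward normal of a supporting hyperplane at $\proj_\cP(0)$. Both are short; yours is arguably the more transparent. For \labelcref{it:cond2} you build $c=w+c^\perp$ with $w\in V_F$ steering the affine path $L(\lambda)=p_F-\lambda w$ from $p_F\notin F$ into $\relint F$ at some $\lambda_0>0$, then choose $c^\perp\in V_F^\perp$ so that the residual $-p_F-\lambda c^\perp$ stays in the normal cone $N_\cP(F)$ near $\lambda_0$. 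This is a valid ``inverse'' of the sufficiency argument, and the step you flag as hardest does go through: for a $k$-face $F$ of a polytope in $\RR^d$, $N_\cP(F)$ is always $(d-k)$-dimensional, i.e., full-dimensional in $V_F^\perp$, so picking $u\in\relint N_\cP(F)$ and $c^\perp=-\mu u/\lambda_0$ for $\mu$ large enough gives $-p_F-\lambda_0 c^\perp\in\relint N_\cP(F)$, which by openness persists for $\lambda$ near $\lambda_0$. The degenerate case you worry about does not occur. By contrast, the paper sidesteps the normal-cone feasibility question entirely via the explicit cost $c=a-x_{\min}$ with $x_{\min}=\proj_F(0)$: it verifies $x^{\delta_1}=x_{\min}$ at $\delta_1=\|x_{\min}\|$ and exhibits a concrete sub-face $F'=\{x:\langle x_{\min},x-x_{\min}\rangle=0\}\cap F$ that the curve then leaves. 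That construction is shorter to verify, but your approach is conceptually symmetric with the sufficiency direction and generalizes cleanly.
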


The two conditions are similar, with the requirement for the proper faces $F$ being less stringent than for the improper face $\cP$: while $\proj_{F}(0)\in\relint F$ is a sufficient condition for~\labelcref{it:cond2}, the condition \labelcref{it:cond2} includes the boundary case where  $\proj_{F}(0)=\proj_{\aff F}(0)\in \rbd F$. %
We shall see that \labelcref{it:cond1} and \labelcref{it:cond2} play different roles in the proof. Simple examples show that changing $\relint \cP$ to~$\cP$ in \labelcref{it:cond1} or~$F$ to~$\relint F$ in \labelcref{it:cond2} invalidates the equivalence in \cref{th:main}. 

\subsection{Proof of \cref{th:main}}

We first prove the ``if'' implication, using the Lagrangian formulation (\cref{le:Lagrange}). This parametrization is convenient because $\eta\mapsto x_{\eta}$ is piecewise affine. While the latter is well-known (even for some more general norms, see \cite{FinzelLi.00} and the references therein); we detail the statement for completeness.

\begin{lemma}\label{le:affine}
  Let~$\cP \subseteq \R^{d}$ be a polytope, $c \in \R^{d}$ and let $x_{\eta}$ be the solution of \eqref{eq:LagrangeIntro} with $\eta > 0$. The curve $[0,\infty]\ni\eta\mapsto x_{\eta}$ is piecewise affine. The affine pieces correspond to faces of~$\cP$ as follows. Fix $\eta_{0}\in [0,\infty]$ and let $F$ be the unique face of $\cP$ such that $x_{\eta_{0}}\in \relint F$. %
  Let $I=\{\eta: x_{\eta}\in \relint F\}$. Then $I$ is an interval containing~$\eta_{0}$ and $\overline{I}\ni\eta\mapsto x_{\eta}\in F$ is affine. In particular, the curve $\eta\mapsto x_{\eta}$ does not return to $\relint F$ after leaving it.
\end{lemma}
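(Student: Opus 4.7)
The plan is to leverage the Lagrangian formulation $x_\eta = \proj_\cP(-\eta c)$ from \cref{le:Lagrange} and to compare $x_\eta$ against the auxiliary curve
\[
\phi(\eta) := \proj_{\aff F}(-\eta c),
\]
which is \emph{affine} in $\eta$ by the affine-subspace part of \cref{Lemma:projset}. The workhorse identity will be its dual characterization $-\eta c - \phi(\eta) \perp \lin F$, where $\lin F$ denotes the linear subspace parallel to $\aff F$.

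The first step is a pointwise identification: whenever $x_\eta \in \relint F$, I claim $x_\eta = \phi(\eta)$. Indeed, being in $\relint F$ means that $x_\eta \pm \varepsilon v \in F \subset \cP$ for every $v \in \lin F$ and all sufficiently small $\varepsilon > 0$; substituting both signs into the projection inequality~\eqref{projineq} characterizing $x_\eta = \proj_\cP(-\eta c)$ upgrades it to $\langle -\eta c - x_\eta, v\rangle = 0$ for every $v \in \lin F$, which is exactly the criterion for $x_\eta = \phi(\eta)$. Consequently, the restriction of $\eta \mapsto x_\eta$ to $I$ agrees with the affine map $\phi$.

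The main step -- and the one requiring care -- is to show that $I$ is an interval. Take $\eta_0 < \eta_1$ in $I$, so $\phi(\eta_i) = x_{\eta_i} \in \relint F$ for $i = 0,1$. Affineness of $\phi$ together with convexity of $\relint F$ immediately gives $\phi(\eta) \in \relint F \subset \cP$ for all $\eta \in [\eta_0,\eta_1]$, so it suffices to verify the projection inequality
\[
\langle -\eta c - \phi(\eta),\, y - \phi(\eta)\rangle \leq 0 \qquad \text{for every } y \in \cP.
\]
Fix such a $y$ and decompose $y - \phi(\eta) = \bigl(\proj_{\aff F}(y) - \phi(\eta)\bigr) + y_2$ with $y_2 := y - \proj_{\aff F}(y) \in (\lin F)^\perp$. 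The first summand lies in $\lin F$ and is therefore orthogonal to $-\eta c - \phi(\eta)$. Because $y_2 \perp \lin F$, the quantity $\langle \phi(\eta), y_2\rangle$ does not depend on which point of $\aff F$ we evaluate at; fixing any $x^F \in \aff F$, the inner product simplifies to the affine function
\[
h(\eta) := \langle -\eta c - x^F,\, y_2\rangle = -\eta\,\langle c, y_2\rangle - \langle x^F, y_2\rangle.
\]
At $\eta = \eta_i$ we have $\phi(\eta_i) = x_{\eta_i}$, so the projection inequality for $\cP$ at $-\eta_i c$ gives $h(\eta_i) \leq 0$; by affineness in $\eta$, this forces $h(\eta) \leq 0$ on the whole segment $[\eta_0,\eta_1]$. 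Hence $\phi(\eta) \in \cP$ satisfies the projection inequality, so $\phi(\eta) = \proj_\cP(-\eta c) = x_\eta$ throughout $[\eta_0,\eta_1]$, proving $[\eta_0,\eta_1] \subset I$.

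The remaining claims fall out as corollaries. The identity $x_\eta = \phi(\eta)$ extends to $\overline I$ by continuity of both sides, which yields an affine curve taking values in the closed set $F$. The full trajectory is piecewise affine because $\cP$ has only finitely many faces, and $[0,\infty]$ is partitioned into the preimages of their relative interiors, each of which is an interval by the argument just given. The no-return statement is exactly the assertion that $I$ is a single interval, so it is immediate from the preceding step.
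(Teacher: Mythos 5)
Your proof is correct and follows the same strategy as the paper: introduce the auxiliary curve $\phi(\eta) = \proj_{\aff F}(-\eta c)$, which is affine by \cref{Lemma:projset}, observe that $x_\eta = \phi(\eta)$ whenever $x_\eta \in \relint F$, and use convexity of $\relint F$. The one place where you go further than the paper's terse write-up is in the ``main step'': the paper moves directly from ``$\phi(\eta) \in \relint F$ for $\eta \in [\eta_1,\eta_2]$'' to ``thus $I$ is an interval,'' which implicitly requires knowing that $\phi(\eta) = \proj_{\cP}(-\eta c)$ for the intermediate $\eta$ --- and $\proj_{\aff F}(p) \in F$ does \emph{not} by itself imply $\proj_{\aff F}(p) = \proj_{\cP}(p)$ (e.g.\ $\cP$ the unit square, $F$ the bottom edge, $p = (0.5, 0.5)$). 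Your decomposition of $y - \phi(\eta)$ into its $\lin F$ and $(\lin F)^\perp$ components, together with the observation that the normal component $h(\eta)$ is affine and nonpositive at both endpoints, supplies exactly the missing justification, so your argument is actually a bit more complete than the one in the paper.
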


\begin{proof}
  Consider $\eta_{1}<\eta_{2}$ in~$I$ and let $H= \aff F$. As $x_{\eta_{i}}\in \relint F$, we have 
  $x_{\eta_{i}}=\proj_{F}(-\eta_{i} c)=\proj_{H}(-\eta_{i} c)$. Since $H$ is an affine subspace, the curve $\eta\mapsto \proj_{H}(-\eta c)$ is affine. In particular, convexity of $\relint F$ implies that $\proj_{H}(-\eta c)\in \relint F$ for all $\eta\in[\eta_{1},\eta_{2}]$. Thus $I$ is an interval and $I\ni\eta\mapsto x_{\eta}\in \relint F$ is affine, and this extends to the closure by continuity.
\end{proof}

Recall that $\rbd K := K\setminus\relint K$ denotes the relative boundary of $K\subset\RR^{d}$. Consider the first time the curve $\eta\mapsto x_{\eta}$ touches a given face~$F$ of~$\cP$. That point may lie in $\relint F$ or in $\rbd F$. As seen in \cref{le:leavingFace} below, the boundary situation indicates that $\eta\mapsto x_{\eta}$ left another face $F_{*}$ when it entered $F\not\subset F_{*}$, meaning that $\cP$ is not monotone. Hence, we analyze that situation in detail in the next lemma.

\begin{lemma}\label{le:boundaryEntrance}
  Let~$\cP \subseteq \R^{d}$ be a polytope, $c \in \R^{d}$ and let $x_{\eta}$ be the solution of \eqref{eq:LagrangeIntro} with $\eta > 0$. Let $F$ be a face of $\cP$ and $I=\{\eta: x_{\eta}\in \relint F\}\neq\emptyset$. 
  Let $\eta_{0} :=\inf I$. If $x_{\eta_{0}}\in\rbd F$, then exactly one of the following holds true:
  \begin{enumerate}
  \item $\proj_{\aff F} (0)\notin F$;
  \item $\eta_{0}=0$, and then $x_{\eta_{0}}=x_{0}=\proj_{\cP} (0)=\proj_{\aff F} (0)\in \rbd F$. In particular, $x_{0}\notin\relint\cP$.
  \end{enumerate} 
\end{lemma}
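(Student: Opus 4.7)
The strategy is to extend the local projection formula to a globally affine curve
\[
\phi(\eta) := \proj_{\aff F}(-\eta c), \qquad \eta\in\RR,
\]
and to deduce the dichotomy from a single convex-combination argument comparing $\phi(0)$ with $\phi(\eta_{0})$.

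First I would note that $\phi$ is affine: projection onto an affine subspace is an affine map by \cref{Lemma:projset}, and $\eta\mapsto -\eta c$ is affine. For $\eta\in I$ we have $x_{\eta}=\proj_{\cP}(-\eta c)\in\relint F$, so the same lemma yields $x_{\eta}=\proj_{F}(-\eta c)=\proj_{\aff F}(-\eta c)=\phi(\eta)$. Continuity of $\phi$ and of $\eta\mapsto x_{\eta}$ then gives $\phi(\eta_{0})=x_{\eta_{0}}$. The assumption $x_{\eta_{0}}\in\rbd F$ forces $\eta_{0}\notin I$, and since $I$ is an interval (\cref{le:affine}) with infimum $\eta_{0}$ and is nonempty, there exists $\epsilon>0$ with $(\eta_{0},\eta_{0}+\epsilon)\subset I$, on which $\phi$ takes values in $\relint F$.

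The crucial step---and what I expect to be the only nonroutine one---is the claim that \emph{$\phi(\eta)\notin F$ for every $\eta\in [0,\eta_{0})$}. Suppose otherwise and pick any $s\in (0,\epsilon)$; affineness of $\phi$ writes $\phi(\eta_{0})$ as a strict convex combination of $\phi(\eta)\in F$ and $\phi(\eta_{0}+s)\in\relint F$. The standard fact that $[x,y)\subset\relint K$ whenever $x\in K$ and $y\in\relint K$ then forces $\phi(\eta_{0})\in\relint F$, contradicting $\phi(\eta_{0})\in\rbd F$.

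With the claim in hand the dichotomy reads off. If $\eta_{0}>0$, applying it at $\eta=0$ gives $\proj_{\aff F}(0)=\phi(0)\notin F$, which is option~(i). If $\eta_{0}=0$, then $\phi(0)=\phi(\eta_{0})=x_{\eta_{0}}=x_{0}$; combining with $x_{0}=\proj_{\cP}(0)$ from \cref{le:Lagrange} and $\phi(0)=\proj_{\aff F}(0)$ yields the equality chain in option~(ii). The final assertion $x_{0}\notin\relint\cP$ follows because every proper face of $\cP$ is contained in $\rbd\cP$ (and if $F=\cP$ then trivially $x_{0}\in\rbd F=\rbd\cP$). Exclusivity is automatic, since (ii) asserts $\proj_{\aff F}(0)\in F$, directly contradicting (i).
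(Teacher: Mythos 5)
Your proof follows essentially the same approach as the paper's: introduce the auxiliary affine curve $\phi(\eta)=\proj_{\aff F}(-\eta c)$ (the paper's $z_\eta$), show it agrees with $x_\eta$ on $I$ and hence on $\overline{I}$, and use convexity of $F$ together with the relative-interior line-segment principle to rule out $\phi(\eta)\in F$ for $\eta<\eta_0$. One cosmetic slip: the standard fact should read $(x,y]\subset\relint K$ when $x\in K$ and $y\in\relint K$ (you wrote $[x,y)$), but since you only invoke the interior point of the segment, the conclusion is unaffected.
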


\begin{proof}
  Recall from \cref {le:affine} that $I$ is an interval and $\overline{I}\ni\eta\mapsto x_{\eta}\in F$ is affine. Let $H= \aff F$ and consider $z_{\eta}:=\proj_{H}(-\eta c)$. Recalling \cref{Lemma:projset}, we have for $\eta\in I$ that $x_{\eta}= \proj_{\cP} (-\eta c)=\proj_{F} (-\eta c)=\proj_{H} (-\eta c)=z_{\eta}$ and in particular $z_{\eta}\in \relint F$. 
	
	As $x_{\eta}=z_{\eta}$ for $\eta\in I$ and both curves are continuous, it follows that $x_{\eta_{0}}=z_{\eta_{0}}$. Therefore, our assumption that $x_{\eta_{0}}\in\rbd F$ implies that  $z_{\eta_{0}}\in\rbd F$. 

  Since $H$ is an affine space, $\RR_{+}\ni \eta\mapsto z_{\eta}$ is affine. We have $z_{\eta_{0}}\in\rbd F$ and $z_{\eta}\in \relint F$ for all $\eta\in (\eta_{0},\eta_{1})$, where $\eta_{1}:=\sup I > \eta_{0}$. (Note that~$I$ cannot be a singleton when $z_{\eta_{0}}\in\rbd F$.) As $F$ is convex, it follows that $z_{\eta}\notin F$ for $0\leq \eta <\eta_{0}$. If $z_{0}\notin F$, we are in the first case. Whereas if $z_{0}\in F$, it follows that $\eta_{0}=0$. Thus $x_{\eta_{0}}=x_{0}=\proj_{\cP}(0)=\proj_{F}(0)$. Moreover, $z_{\eta_{0}}=x_{\eta_{0}}\in F$ implies $x_{\eta_{0}}=\proj_{\aff F}(0)$.
\end{proof} 

\begin{lemma}\label{le:leavingFace}
  Let~$\cP \subseteq \R^{d}$ be a polytope, $c \in \R^{d}$ and let $x_{\eta}$ be the solution of \eqref{eq:LagrangeIntro} with $\eta > 0$. Let $\eta_{*}\geq0$ and let $F_{*}$ be the unique face of $\cP$ with $x_{\eta_{*}}\in\relint F_{*}$. Suppose there exists $\eta>\eta_{*}$ such that $x_{\eta}\notin F_{*}$ and let $\eta_{0}=\max \{\eta\geq \eta_{*}:\, x_{\eta}\in F_{*}\}$. For sufficiently small $\eta_{1}>\eta_{0}$, there is a unique face $F$ such that $x_{\eta}\in\relint F$ for all $\eta \in (\eta_{0},\eta_{1})$.\footnote{When traveling along the curve $(x_{\eta})_{\eta\geq\eta_{*}}$, $F$ is the first face outside $F_{*}$ whose interior is visited.} If $x_{0}:=\proj_{\cP} (0)\in\relint\cP$, then~$F$ satisfies $\proj_{\aff F} (0)\notin F$.
\end{lemma}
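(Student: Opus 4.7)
The plan is to apply \cref{le:boundaryEntrance} to the face $F$ entered just past $\eta_0$. Existence and uniqueness of such an $F$ for sufficiently small $\eta_1 > \eta_0$ follow directly from the piecewise affine structure in \cref{le:affine}: on the interior of a single affine piece, the face whose relative interior contains $x_\eta$ is constant, so we choose $\eta_1$ such that $(\eta_0,\eta_1)$ is contained in one such piece, which determines $F$ uniquely.

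To invoke \cref{le:boundaryEntrance} for this $F$, set $I_F := \{\eta : x_\eta \in \relint F\}$. We first claim $F \not\subseteq F_*$: otherwise $x_\eta \in \relint F \subseteq F_*$ for $\eta \in (\eta_0,\eta_1)$, contradicting the maximality defining $\eta_0$. By continuity and closedness of both $F$ and $F_*$, we have $x_{\eta_0} \in F \cap F_*$. Since $F \cap F_*$ is a face of $\cP$ strictly contained in $F$ (because $F \not\subseteq F_*$), we conclude $x_{\eta_0} \in \rbd F$. By \cref{le:affine}, $I_F$ is an interval; combined with $(\eta_0,\eta_1) \subseteq I_F$ and $\eta_0 \notin I_F$, connectedness forces $\inf I_F = \eta_0$.

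It remains to exclude the second case in \cref{le:boundaryEntrance}, namely $\inf I_F = 0$. If $\eta_0 = 0$, then $\eta_* = 0$ as well, and $F_*$ must be the unique face containing $x_0$ in its relative interior; under the hypothesis $x_0 \in \relint \cP$, this face equals $\cP$ itself. But then $x_\eta \in \cP = F_*$ for every $\eta$, contradicting the hypothesis that some $\eta > \eta_*$ satisfies $x_\eta \notin F_*$. Therefore $\inf I_F = \eta_0 > 0$, and \cref{le:boundaryEntrance} delivers $\proj_{\aff F}(0) \notin F$, as desired. The main conceptual work is in identifying our $\eta_0$ (the exit time from $F_*$) with the entrance time $\inf I_F$ into $F$ appearing in \cref{le:boundaryEntrance}; this identification reduces to the structural observation $F \not\subseteq F_*$, which is forced by the hypothesis that the curve leaves $F_*$.
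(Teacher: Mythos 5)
Your proof is correct and follows essentially the same route as the paper: identify the face~$F$ that the curve enters just past $\eta_0$ via the piecewise-affine structure of \cref{le:affine}, show that $x_{\eta_0}\in\rbd F$, identify $\eta_0$ with the entrance time $\inf I_F$, and invoke \cref{le:boundaryEntrance}. Two small remarks. First, you correctly prove $F\not\subseteq F_*$, which is what is actually needed to conclude $F\cap F_*\subsetneq F$ (a proper face of $F$, hence contained in $\rbd F$); the paper's proof writes the inclusion in the opposite direction ($F_*\not\subset F$), which appears to be a slip, and your version is the right one. Second, to rule out case~(ii) of \cref{le:boundaryEntrance} you argue by contradiction that $\eta_0=0$ would force $F_*=\cP$ and hence no exit; the paper instead reads off directly that case~(ii) asserts $x_0\notin\relint\cP$, contradicting the hypothesis. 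Both are fine; the paper's is marginally shorter, but yours is equally valid and perhaps clearer about why $\eta_0>0$.
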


\begin{proof}
  For $x\in\cP$, let $F(x)$ denote the unique face such that $x\in \relint F(x)$. By \cref{le:affine}, the map $(\eta_{0},\infty)\ni\eta\mapsto  F(x_{\eta})$ has finitely many values and the preimage of each value is an interval. Hence the map must be constant on $(\eta_{0},\eta_{1})$ for sufficiently small $\eta_{1}>\eta_{0}$. Let $F=F(x_{\eta})$, $\eta\in (\eta_{0},\eta_{1})$ be the corresponding face. As $F_{*}$ and $F$ are faces with $F_{*}\not\subset F$, we have $F\cap F_{*}\subset\rbd F$. Thus $x_{\eta_{0}}\in F\cap F_{*}$ implies $x_{\eta_{0}}\in\rbd F$, and now \cref{le:boundaryEntrance} applies.
\end{proof}

\begin{proof}[Proof of \cref{th:main}]
  \emph{Step 1: \labelcref{it:cond1}, \labelcref{it:cond2} $\Rightarrow$ monotone}. If $\cP$ is not monotone, then \cref{le:leavingFace} shows that either  \labelcref{it:cond1} or \labelcref{it:cond2} must be violated.
  
  \emph{Step 2: Not \labelcref{it:cond1} $\Rightarrow$ not monotone}. Suppose that $x_0:=\proj_{\cP} (0) \notin \relint \cP$. 
As $x_{0} \in  \cP\setminus (\relint \cP)$, there exists a hyperplane $H= \left\{ x \in \RR^{d} \text{ : } \langle a, x \rangle = b \right\}$ with 
$$
x_0\in H, \qquad  \cP \subseteq  \left\{ x \in \RR^{d} \text{ : } \langle a, x \rangle \leq b \right\}, \qquad  \relint \cP \subseteq  \left\{ x \in \RR^{d} \text{ : } \langle a, x \rangle < b \right\}.
$$
Define $c=a$ and $F=H\cap \cP$ and $\delta_{0} = \|x_0\|$, and denote $\overline{B}_{\delta} =\{ x\in \RR^d: \ \|x\|\leq \delta\}$. Then $\cP \cap \overline{B}_{\delta_0} = \left\{ x_{0} \right\}$ and in particular $x^{\delta_{0}} = x_{0}$. Moreover, 
\begin{equation}\label{eq:Not(i)NotMon}
     \langle c,x \rangle=b\quad \text{for all } x\in F,\quad  \text{while}\quad 
    \langle c, x\rangle <  b \quad \text{for all } x\in \relint \cP.
\end{equation}
For any $\delta >\delta_{0}$, we have $\overline{B}_\delta \cap \relint \cP\neq \emptyset$, and then~\eqref{eq:Not(i)NotMon} implies $x^{\delta} \notin F$. In particular, $\cP$ is not monotone for the cost $c$.
  
  \emph{Step 3: Not \labelcref{it:cond2} $\Rightarrow$ not monotone}. Given the assumption,  there exists a face $F$ of $\cP$ such that $\proj_{\aff F} (0)$ does not belong to $F$. Set  $x_{\min}:=\proj_{F} (0)$. 
As $F$ is a face of $\cP$, there is a hyperplane $H=\{ x\in \RR^d: \ \langle a, x\rangle=b \}$ such that 
$$ F= H\cap \cP \quad {\rm and}\quad  \cP\subset \{ x\in \RR^d: \ \langle a, x\rangle \geq b \}.  $$
Define $c=a-x_{\min}$ and $\delta_1= \|x_{\min}\|$. 
Then for every  $x\in \cP\cap \overline{B}_{\delta_1}$,
$$ \langle c,x \rangle= \langle a, x \rangle-\langle x_{\min}, x\rangle \geq  b-\| x\|\|x_{\min}\|\geq b-\| x_{\min}\|^2=\langle c, x_{\min}\rangle,$$
showing that $x^{\delta_1}=x_{\min}$. 
Recall that $x_{\min}:=\proj_{F} (0)$, so that $\langle x_{\min}, x-x_{\min} \rangle \geq 0$ for all $x\in F$ (\cref{Lemma:projset}). Thus $x_{\min}\neq \proj_{\aff F} (0)$ implies that
$$ F':= \{ x\in \RR^d: \ \langle x_{\min}, x-x_{\min} \rangle=0\}\cap F  $$
is a proper face of $F$---and {\it a fortiori} a face of $\cP$---such that 
$$ \emptyset \neq F\setminus F' \subset  \{ x\in \RR^d: \ \langle x_{\min}, x-x_{\min} \rangle>0\}. $$
We claim that $x^{\delta}\notin F'$ for all $\delta> \delta_1$. Indeed, 
for any $x'\in F'$
it holds that
$$ \langle c, x' \rangle= \langle a, x' \rangle-\langle x_{\min}, x'\rangle =  b-\langle x_{\min}, x'\rangle= \langle c,  x_{\min}\rangle, $$
whereas for any $x\in F\setminus F'$, it holds that
$$ \langle c,x \rangle= b-\langle x_{\min}, x\rangle =  \langle c, x_{\min} \rangle-\langle x_{\min}, x-x_{\min}\rangle< \langle c, x_{\min} \rangle.$$
In summary, $\langle c, x \rangle < \langle c, x' \rangle$ for all $x\in F\setminus F'$ and $x'\in F'$. In view of $x^{\delta_{1}}=x_{\min}\in F'$, it follows that $x^{\delta}\notin F'$ for all $\delta> \delta_1$ and in particular that $\cP$ is not monotone for the cost~$c$.
\end{proof} 

\section{Applications}\label{se:applications}

\subsection{Soft-Min}

To begin with a straightforward example, let $\cP=\Delta$ be the unit simplex in Euclidean space~$\RR^{d}$ and $c=(c_{1},\dots,c_{d})\in\RR^{d}$. Then
$$
  \min_{x\in\Delta} \langle c,x \rangle = \min_{1\leq i\leq d} c_{i}
$$
corresponds to finding the minimum value of $c$. More specifically, $x=(x_{1},\dots,x_{d})$ is a minimizer if and only if $x$ is supported in $\argmin c$; i.e., $\spt x=\{i:\, x_{i}\neq0\}\subset \argmin c$. When this linear program is regularized with the entropy of $x$, we obtain the usual soft-min (counterpart of log-sum-exp) which gives large weights to the small values of $c$ but non-zero weights to all values. The quadratic regularization,
\begin{align}\label{eq:quadSoftMin}
  x^{\delta}=\argmin_{x\in\Delta:\, \|x\|\leq\delta} \langle c,x \rangle \qquad \text{or}\qquad
  x_{\eta}=\argmin_{x\in\Delta} \langle c,x \rangle + \frac{1}{2\eta}\|x\|^{2},
\end{align} 
yields a sparse soft-min: as $\delta\to\delta_{\max}$ (or as $\eta\to\infty$), the support of the solution tends to $\argmin c$. In this example,  $\delta_{\min} = \frac{1}{\sqrt{d}}$ and $\delta_{\max} = \frac{1}{\sqrt{ \#\argmin c}}$.

\begin{corollary}\label{co:simplex}
  The unit simplex $\Delta\subset\RR^{d}$ is monotone for all $d\geq1$; i.e., the support of the soft-min $x^{\delta}$ (or $x_{\eta}$) defined in~\eqref{eq:quadSoftMin} decreases monotonically to $\argmin c$ as $\delta\in[\delta_{\min},\delta_{\max}]$  (or $\eta\geq0$) increases.
\end{corollary}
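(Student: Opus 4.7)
The plan is to apply \cref{th:main}: it suffices to verify conditions \labelcref{it:cond1} and \labelcref{it:cond2} for $\cP = \Delta$. Both conditions reduce to explicit computations of minimum-norm points on the simplex and its faces, exploiting the symmetry of $\Delta$ under coordinate permutations.

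First, I would recall the face structure of the simplex: each nonempty face $F$ of $\Delta$ corresponds to a nonempty subset $S \subseteq \{1,\dots,d\}$ via
\[
F_S = \{x \in \Delta : x_i = 0 \text{ for all } i \notin S\}, \qquad \aff F_S = \Bigl\{x \in \RR^d : x_i = 0 \text{ for } i \notin S,\ \sum_{i \in S} x_i = 1\Bigr\}.
\]
The improper face is $F_{\{1,\dots,d\}} = \Delta$.

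For \labelcref{it:cond1}, I would compute $\proj_{\Delta}(0) = \argmin_{x \in \Delta}\|x\|^2$. By the symmetry under coordinate permutations and strict convexity, the minimizer must be invariant under all permutations, forcing $\proj_{\Delta}(0) = (1/d,\dots,1/d)$. Since all coordinates are strictly positive, this point lies in $\relint\Delta$.

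For \labelcref{it:cond2}, I would apply the same symmetry argument within $\aff F_S$: the orthogonal projection of $0$ onto the affine subspace $\aff F_S$ is invariant under permutations of the coordinates indexed by $S$ (and vanishes on coordinates outside $S$), so
\[
\proj_{\aff F_S}(0) = \tfrac{1}{|S|}\,\mathbf{1}_S,
\]
where $\mathbf{1}_S$ is the indicator vector of $S$. This point has nonnegative entries summing to $1$, hence lies in $F_S$, verifying \labelcref{it:cond2}.

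With both conditions established, \cref{th:main} gives monotonicity of $\Delta$. The identification of $\ext F(x)$ with $\{i : x_i > 0\}$ (the measure-theoretic support) for the simplex, as noted in the discussion after \cref{de:monotone}, yields the monotone decrease of $\spt x^{\delta}$. Finally, the limiting support $\spt x^{\delta_{\max}} = \argmin c$ follows from \cref{Lemma:ContinuityIncreasing}, since $x^{\delta_{\max}}$ is the minimum-norm minimizer of $\langle c,\cdot\rangle$ on $\Delta$, which by the same symmetry argument equals $\tfrac{1}{\#\argmin c}\mathbf{1}_{\argmin c}$. No real obstacle is expected; the only care needed is to remember that \labelcref{it:cond2} asks for projection onto $\aff F$ (not $F$) and to note that this projection automatically lands in $F$ because its coordinates are nonnegative.
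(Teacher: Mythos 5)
Your proof is correct and follows essentially the same route as the paper: verify conditions \labelcref{it:cond1} and \labelcref{it:cond2} of \cref{th:main} by computing the minimum-norm point of each face (and of $\Delta$ itself) to be the uniform barycenter $\tfrac{1}{|S|}\mathbf{1}_S$, which lies in (the relative interior of) the face. The only cosmetic difference is that you obtain the coefficients by a symmetry argument while the paper solves the constrained quadratic explicitly, and you add the observation that $\spt x^{\delta_{\max}} = \argmin c$, which the paper states in the surrounding discussion rather than in the proof.
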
 

\begin{proof}
We have $\ext \Delta = \{e_{1}, \dots, e_{d}\}$, the canonical basis of~$\RR^{d}$.  Let $F \subset\Delta$ be a face, then there exist $\{i_{1}, \dots, i_{k}\} \subseteq \{1, \dots, d\}$ such that $F = \coh(e_{i_{1}},\dots, e_{i_{k}})$. Note that $\proj_{F}(0)= \sum_{j=1}^{k}\lambda_{j}e_{i_{j}}$, where $\lambda=(\lambda_{1},\dots,\lambda_{k})$ is the solution of 
\begin{align*}
    \min_{\lambda \in \RR^{d}} \big\| \sum_{j=1}^{k}\lambda_{j}e_{i_{j}} \big\|^{2} \quad \text{ s.t. } \sum_{j=1}^{k} \lambda_{j} = 1,
\end{align*}
namely $\lambda= (k^{-1}, \dots, k^{-1})$. In particular, $\proj_{F}(0)=k^{-1}\sum_{j=1}^{k} e_{i_{j}}\in\relint F$. Now \cref{th:main} applies.
\end{proof} 

\subsection{Optimal Transport}

We recall the quadratically regularized optimal transport problem
\begin{equation}
    \tag{QOT}\label{EQOT}
    \inf_{\gamma\in \Gamma(\mu, \nu)} \int \hat c(x,y)d\gamma(x,y)+\frac{1}{2\eta}  \left\| \frac{d\gamma}{  d(\mu \otimes \nu) }\right\|_{L^2( \mu \otimes \nu)}^2
\end{equation} 
where $\Gamma(\mu, \nu)$ denotes the set of couplings of the probability measures $(\mu, \nu)$. We will be concerned with the discrete case as introduced in \cite{blondel18quadratic, DesseinPapadakisRouas.18, EssidSolomon.18}. See also \cite{LorenzMahler.22, LorenzMannsMeyer.21, Nutz.24} for more general theory, \cite{BayraktarEcksteinZhang.22, EcksteinNutz.22, GarrizmolinaEtAl.24} for asymptotic aspects, \cite{EcksteinKupper.21, GeneveyEtAl.16, GulrajaniAhmedArjovskyDumoulinCourville.17, LiGenevayYurochkinSolomon.20, seguy2018large} for computational approaches and~\cite{LiGenevayYurochkinSolomon.20, ZhangMordantMatsumotoSchiebinger.23} for some recent applications.

Fix $N\in\NN$ and two sets of distinct points, $\{X_{1},\dots,X_{N}\}\subset\RR^{D}$ and $\{Y_{1},\dots,Y_{N}\}\subset\RR^{D}$. Let $\mu = \frac{1}{N} \sum_{i=1}^{N} \delta_{X_{i}}$ and $\nu = \frac{1}{N} \sum_{i=1}^{N} \delta_{Y_{i}}$ denote the associated empirical measures, and let $\hat c:\RR^{D}\times\RR^{D}\to\RR$ be a (cost) function. We note that $\mu \otimes \nu=\frac{1}{N^2}\sum_{i,j=1}^N \delta_{(X_i, Y_j)}$ while $d\gamma/d(\mu \otimes \nu)$ is simply the ratio of the probability mass functions. Any coupling  $\gamma \in \Gamma(\mu, \nu) $ can be identified with its matrix of probability weights 
$$
(\gamma_{i,j})_{i,j=1}^n\in \Gamma_{N}=\left\{\gamma\in\RR^{N\times N}:\, \sum_{i=1}^N \gamma_{i,j}  = \frac{1}{N},   \; \sum_{j=1}^N \gamma_{i,j}=\frac{1}{N}, \; \gamma_{i,j}\geq 0\right\}
$$
via $\gamma = \sum_{i,j=1}^N \gamma_{i,j}\delta_{(X_i, Y_j)}$. Writing similarly $c_{ij}=\hat c(X_{i},X_{j})$, \eqref{EQOT} can be identified with the problem
\[
 \inf_{\gamma\in \Gamma_N} \langle c, \gamma \rangle +\frac{\eps N}{2}  \|\gamma\|^2. 
\]
Clearly $\Gamma_{N}=\frac{1}{N}\Pi_{N}$ where  
$$
  \Pi_{N}=\left\{\pi\in\RR^{N\times N}:\, \sum_{i=1}^N \pi_{i,j}  = 1,   \; \sum_{j=1}^N \pi_{i,j}=1, \; \pi_{i,j}\geq 0\right\}
$$ 
denotes the {\it Birkhoff polytope} of doubly stochastic matrices; see, e.g., \cite{Brualdi.06}. 
The monotonicity (in the sense of \cref{de:monotone}) of $\Pi_N$ is equivalent to that of $\Gamma_N$. 

The following result connects the measure-theoretic support $\spt \pi =\{(i,j): \, \pi_{ij}>0\}$ with the geometric notion discussed below \cref{de:monotone}.

\begin{lemma}\label{le:BirkhoffSptMonEquiv}
    Let $\pi, \pi'\in \Pi_N$. Denote by $F(\pi)$ the unique face $F\subset\Pi_N$ such that $\pi\in\relint F$. For $\pi, \pi'\in\Pi_N$, the following are equivalent:
  \begin{enumerate}
    \item $\spt \pi' \subset \spt \pi$,
    \item $F(\pi')\subset F(\pi)$,
    \item if $\pi\in F$ for some face $F\subset\Pi_N$, then $\pi'\in F$.
  \end{enumerate}
\end{lemma}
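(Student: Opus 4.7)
The plan is to reduce all three statements to a single combinatorial identification of the minimal face:
\[
F(\pi) = F_{\spt \pi}, \qquad F_S := \{\sigma \in \Pi_N :\, \sigma_{ij} = 0 \text{ for all } (i,j) \notin S\},
\]
valid for any $\pi \in \Pi_N$ with $S = \spt \pi$. Once this is established the rest is formal. The containment $\spt \pi' \subseteq \spt \pi$ is by definition $\pi' \in F_{\spt \pi} = F(\pi)$, which, by the minimality of $F(\pi')$ among faces containing $\pi'$, is equivalent to $F(\pi') \subseteq F(\pi)$; this gives (i) $\Leftrightarrow$ (ii). For (ii) $\Leftrightarrow$ (iii), one direction applies (iii) to $F = F(\pi)$ to get $\pi' \in F(\pi)$, hence $F(\pi') \subseteq F(\pi)$; for the other, any face $F$ containing $\pi$ must contain $F(\pi)$ by minimality, so $F \supseteq F(\pi) \supseteq F(\pi')$ and thus $\pi' \in F$. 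These last arguments do not rely on the specific structure of $\Pi_N$.

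To prove the combinatorial identification, I would first argue that $F_{\spt \pi}$ is a face of $\Pi_N$ by writing it as $\Pi_N \cap \bigcap_{(i,j) \notin \spt \pi} \{\sigma_{ij} = 0\}$; each hyperplane $\{\sigma_{ij} = 0\}$ is tangent to $\Pi_N$ since $\sigma_{ij} \geq 0$ is one of the defining inequalities of $\Pi_N$, and a finite intersection of faces is again a face. Since $\pi \in F_{\spt \pi}$, the minimality of $F(\pi)$ yields $F(\pi) \subseteq F_{\spt \pi}$. For the reverse inclusion I would verify that $\pi \in \relint F_{\spt \pi}$ using the line-extension characterization of the relative interior: given any $\sigma \in F_{\spt \pi}$, the matrix $\pi + \epsilon(\pi - \sigma)$ still satisfies the doubly stochastic row/column sum equations (which are affine in the entries) and still vanishes off $\spt \pi$ (where both $\pi$ and $\sigma$ vanish), while on $\spt \pi$ the strict positivity of $\pi_{ij}$ guarantees nonnegativity for all sufficiently small $\epsilon > 0$. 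Hence $\pi \in \relint F_{\spt \pi}$, and since there is a unique face with $\pi$ in its relative interior, $F(\pi) = F_{\spt \pi}$.

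I do not expect a serious obstacle. The only subtlety is the identification of $F(\pi)$ with the ``zero-pattern face'' $F_{\spt \pi}$, which is classical in the combinatorics of the Birkhoff polytope but merits the short verification above, because the paper's notion of support for general polytopes (via $\ext F(x)$) and the measure-theoretic support of $\pi$ are \emph{a priori} distinct concepts; the content of the lemma is precisely that, for $\Pi_N$, they determine the same order on points.
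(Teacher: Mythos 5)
Your proof is correct, and it takes a genuinely different route for the substantive part of the lemma. The paper also disposes of (ii)$\Leftrightarrow$(iii) via minimality of $F(\pi)$, exactly as you do, but for (i)$\Leftrightarrow$(iii) it invokes the Brualdi--Gibson characterization of the faces of $\Pi_N$ through the permanent function: the forward direction constructs a separating face much as you do (the facet $\{\sigma_{ij}=0\}$), while the reverse direction uses the Birkhoff decomposition of $\pi'$ into permutation matrices together with $\mathrm{per}(A)=t$ to locate a zero of $\pi$ that $\pi'$ charges. You instead prove the clean identification $F(\pi)=F_{\spt\pi}$ directly, by noting $F_{\spt\pi}$ is a face (an intersection of facets, equivalently $\Pi_N\cap\{\sum_{(i,j)\notin\spt\pi}\sigma_{ij}=0\}$) and verifying $\pi\in\relint F_{\spt\pi}$ with the line-extension criterion; every equivalence then falls out formally. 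Your argument is more elementary --- no permanents, no extreme-point decompositions --- and it visibly generalizes beyond the Birkhoff polytope to any polytope cut out by affine equalities and coordinate nonnegativity, whereas the paper's route is tailored to $\Pi_N$ via Birkhoff's and Brualdi's theorems. Both approaches are sound; yours makes the structural statement ``minimal face $=$ zero-pattern face'' explicit, which is arguably the cleanest formulation of what the lemma asserts.
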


\begin{proof}
  The equivalence of (ii) and (iii) follows from the fact that $F(\pi)$ is the smallest face containing~$\pi$ \cite[Theorem~5.6]{Brondsted.83}. The equivalence of (i) and (iii) is deferred to \cref{lemma:zerosIffMonot} below. (The implication (ii) $\Rightarrow$ (i) is straightforward, and holds for any polytope~$\cP$ of measures.)
\end{proof} 

As a consequence of \cref{le:BirkhoffSptMonEquiv}, we have the following equivalence.

\begin{corollary}\label{co:monotoneEquivalent}
    Let $\gamma_{\eta,\hat c}$ denote the solution of \eqref{EQOT} for $\hat c: \RR^N\times \RR^N\to \RR$. The Birkhoff polytope is monotone if and only if the optimal support $\spt \gamma_{\eta,\hat c} $ is monotone decreasing in~$\eta$ for all~$\hat c$.
\end{corollary}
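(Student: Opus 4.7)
The plan is to derive this from \cref{le:BirkhoffSptMonEquiv}, which supplies the translation between measure-theoretic support and face membership on the Birkhoff polytope. First I would observe that $\Gamma_N = \frac{1}{N}\Pi_N$, so the two polytopes share the same face lattice, and one is monotone in the sense of \cref{de:monotone} if and only if the other is. Moreover, the solution $\gamma_{\eta,\hat c}$ of~\eqref{EQOT} coincides (up to this rescaling and a reparametrization of $\eta$) with the Lagrangian solution $x_\eta$ of \cref{le:Lagrange} for $\Gamma_N$ with cost matrix $c_{ij} := \hat c(X_i,Y_j)$. Conversely, every $c \in \RR^{N\times N}$ is realized in this way by choosing any $\hat c$ that interpolates its entries on the data pairs.

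For the forward implication, I would assume $\Pi_N$ is monotone, fix $\hat c$ and $\eta \leq \eta'$, and let $F$ denote the minimal face of $\Gamma_N$ containing $\gamma_{\eta,\hat c}$. Monotonicity and~\eqref{eq:monotoneLagrange} give $\gamma_{\eta',\hat c} \in F$, and the implication (ii)~$\Rightarrow$~(i) of \cref{le:BirkhoffSptMonEquiv} then yields $\spt \gamma_{\eta',\hat c} \subset \spt \gamma_{\eta,\hat c}$, as required.

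For the reverse implication, I would take an arbitrary $c \in \RR^{N\times N}$, realize it via some $\hat c$, and fix any face $F \subset \Gamma_N$ with $\gamma_{\eta,\hat c} \in F$. For $\eta' \geq \eta$, the support hypothesis gives $\spt \gamma_{\eta',\hat c} \subset \spt \gamma_{\eta,\hat c}$, and the implication (i)~$\Rightarrow$~(iii) of \cref{le:BirkhoffSptMonEquiv} (applied with $\pi := \gamma_{\eta,\hat c}$ and $\pi' := \gamma_{\eta',\hat c}$) then yields $\gamma_{\eta',\hat c} \in F$. Hence $\Pi_N$ is monotone.

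The real content lives in \cref{le:BirkhoffSptMonEquiv}, and the corollary itself is essentially bookkeeping once that equivalence is available. The main obstacle is therefore the nontrivial implication (i)~$\Rightarrow$~(iii) in that lemma, deferred to \cref{lemma:zerosIffMonot}, which requires genuine combinatorial content about which support patterns on $\{1,\dots,N\}^2$ can coexist inside a single face of the Birkhoff polytope.
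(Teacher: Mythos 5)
Your proposal is correct and takes the same route the paper intends: the paper presents \cref{co:monotoneEquivalent} as a direct consequence of \cref{le:BirkhoffSptMonEquiv} without writing out a proof, and your argument is exactly the straightforward bookkeeping—rescaling $\Gamma_N=\frac{1}{N}\Pi_N$, identifying $\gamma_{\eta,\hat c}$ with the Lagrangian solution for $c_{ij}=\hat c(X_i,Y_j)$, noting that every $c$ is realized by some $\hat c$, and then invoking the equivalence (i)~$\Leftrightarrow$~(iii) of the lemma in both directions. (In the forward direction you could apply (iii)~$\Rightarrow$~(i) directly rather than first passing to the minimal face and (ii); both are fine since $\gamma_{\eta',\hat c}\in F(\gamma_{\eta,\hat c})$ gives $F(\gamma_{\eta',\hat c})\subset F(\gamma_{\eta,\hat c})$ by minimality.)
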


\begin{theorem}\label{th:BirkhoffMon}
  The Birkhoff polytope $\Pi_{N}$ is monotone for $1\leq N\leq 4$ but not monotone for $N\geq 5$. In particular, when $N\geq5$, the optimal support $\eta\mapsto\spt(\gamma_{\eta,\hat c})$ is not monotone for some costs $\hat c$.
\end{theorem}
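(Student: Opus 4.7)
The plan is to apply the geometric characterization of monotonicity from \cref{th:main} to $\cP = \Pi_N$. Condition~\labelcref{it:cond1} holds for every $N$: by the invariance of $\Pi_N$ under the $S_N \times S_N$ action of row/column permutations, the minimum-norm point of $\Pi_N$ must be invariant, forcing $\proj_{\Pi_N}(0) = \frac{1}{N} J_N$ (the matrix with all entries $\frac{1}{N}$), which clearly lies in $\relint \Pi_N$. Hence everything reduces to checking~\labelcref{it:cond2} for every face.

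Every non-empty face $F$ of $\Pi_N$ corresponds to a \emph{support pattern} $S \subseteq [N]^2$ admitting a perfect matching, via $F_S = \{\pi \in \Pi_N : \pi_{ij} = 0 \text{ for } (i,j) \notin S\}$, and the affine hull $\aff F_S$ consists of matrices supported in $S$ with all row and column sums equal to $1$. Writing Lagrange multipliers $(\alpha, \beta) \in \RR^N \times \RR^N$ for the sum constraints, the KKT conditions for minimizing $\|\pi\|^2$ over $\aff F_S$ give $\pi^*_{ij} = \alpha_i + \beta_j$ for $(i,j) \in S$ and $\pi^*_{ij} = 0$ otherwise. Thus $\proj_{\aff F_S}(0) \in F_S$ if and only if $\alpha_i + \beta_j \geq 0$ for every $(i,j) \in S$, and \labelcref{it:cond2} becomes the combinatorial question of whether this inequality holds for all matchable patterns~$S$.

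For $1 \leq N \leq 4$, I would carry out a finite case analysis, enumerating support patterns up to the natural symmetries (row permutation, column permutation, and transposition). The cases $N=2$ and $N=3$ involve very few equivalence classes and can be handled by hand; for $N=4$ the enumeration is larger but still bounded, and for each representative one solves a small linear system to verify $\alpha_i + \beta_j \geq 0$ on~$S$. For $N \geq 5$, I would exhibit an explicit pattern $\tilde S \subseteq [5]^2$ where the Lagrangian system produces some $\alpha_i + \beta_j < 0$ on $\tilde S$. The guiding heuristic is to mix ``sparse'' rows/columns (with few allowed entries, forcing large positive values) with ``dense'' rows/columns (where the remaining mass can be negative to compensate while still meeting the sum constraints). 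Once such a pattern is found, Step~3 in the proof of \cref{th:main} provides an explicit cost~$c$ witnessing non-monotonicity. For $N \geq 6$, I would simply embed the $N=5$ counterexample by taking $S = \tilde S \cup \{(i,i) : 6 \leq i \leq N\}$; this ``pads with an identity block'' preserves the sign of the relevant $\alpha_i + \beta_j$, since the padding block decouples from the $5 \times 5$ block in both the sum constraints and the Lagrangian. Finally, the consequence for optimal transport follows from \cref{co:monotoneEquivalent}.

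The main obstacle is the construction of the $N=5$ counterexample: there is no obvious general principle to single out the ``bad'' pattern, and verifying negativity requires solving the full Lagrangian system for a specific~$\tilde S$; this is where the authors' remark that the example was built ``by hand, based on theoretical considerations'' becomes essential. A secondary difficulty is the case analysis for $N=4$, where the number of equivalence classes of matchable patterns is substantial and a purely hand-written verification is tedious without some structural observations (e.g., block-decomposition of patterns whose bipartite graph is disconnected, which reduces to smaller $\Pi_{N'}$).
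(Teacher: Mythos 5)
Your plan coincides with the paper's: apply \cref{th:main}, dispose of~\labelcref{it:cond1} by noting that $S_N\times S_N$-symmetry forces $\proj_{\Pi_N}(0)=\tfrac1N J_N\in\relint\Pi_N$, and reduce everything to checking~\labelcref{it:cond2} over all faces. Your parametrization of $\proj_{\aff F_S}(0)$ via Lagrange multipliers with $\pi^*_{ij}=\alpha_i+\beta_j$ on $S$ is equivalent to the paper's computation, which instead writes the projection as $\sum_k\lambda_k P^{(k)}$ in the vertex basis of the face and solves for the $\lambda_k$; both lead to the same linear system. Two small points of comparison: the paper's $N\geq5$ counterexample is a single pattern whose projection has a coefficient $(4-N)/(N+2)$ on the identity, so negativity for all $N\geq5$ comes out of one formula and no identity-block padding is needed (though your padding argument is valid and the paper notes the corresponding monotonicity-in-$N$ fact in a footnote). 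For $1\leq N\leq4$, the paper also resorts to an exhaustive, computer-assisted check over the $7443$ nonempty faces of~$\Pi_4$, so you are not overlooking a slicker hand argument there.

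The genuine gap is that the negative half of the theorem rests entirely on producing a concrete non-centered face of~$\Pi_5$, and your proposal leaves this as a heuristic (``mix sparse and dense rows/columns'') rather than exhibiting a pattern and verifying the sign. As written, the proposal therefore does not prove the statement for $N\geq5$. The pattern that works is the ``arrow'' support $S=\{(i,j): i=j\text{ or }i=1\text{ or }j=1\}$, whose associated face is the convex hull of the identity and the $N-1$ transpositions $(1\,k)$; by Brualdi's permanent criterion this is a face (the permanent equals $N$), and solving your $\alpha,\beta$ system (or the paper's $\lambda$-system) with the obvious symmetry ansatz $\alpha_1=\beta_1$, $\alpha_2=\cdots=\alpha_N=\beta_2=\cdots=\beta_N$ gives $\proj_{\aff F}(0)_{1,1}=(4-N)/(N+2)$, which is strictly negative precisely when $N\geq5$. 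That single computation is the missing content; everything else in your outline is sound and matches the paper's route, including invoking \cref{co:monotoneEquivalent} to translate the geometric failure into non-monotone support of~$\gamma_{\eta,\hat c}$.
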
 

\begin{example}\label{ex:counterexCost}
We exhibit an example for $N=5$ where the support is not monotone. (This is not easily achieved by brute-force numerical experiment). Our starting point is the matrix~$A$ in~\eqref{eq:badA} below, which is used to describe a face where \labelcref{it:cond2} fails. Step~3 in the proof of \cref{th:main} then suggests to perturb $-A$ in a suitable direction to find a cost~$c$ exhibiting non-monotonicity. With some geometric considerations, this leads us to propose the cost matrix
\[
c=\begin{bmatrix}
-1.1 & -1 & -1 & -1 & -1 \\
-1 & -1.1 & 0  & 0  & 0  \\
-1 & 0  & -1.1 & 0  & 0  \\
-1 & 0  & 0  & -1.1 & 0  \\
-1 & 0  & 0  & 0  & -1.1
\end{bmatrix}.
\]
(If one prefers a nonnegative cost matrix, one can obtain that by adding 1.1 to all entries. This translation does not change the solution $\gamma_{\eta}$ to~\eqref{EQOT}.) For $\eta=2.5$, or equivalently regularization strength $1/(2\eta)=0.2$, the corresponding problem~\eqref{EQOT} has an exact solution coupling $\gamma_{\eta}$ with probability weights given by
\[(\gamma_{\eta=2.5})=
\begin{bmatrix}
0  & 0.05 & 0.05 & 0.05 & 0.05 \\
0.05 & 0.15 & 0  & 0  & 0  \\
0.05 & 0  & 0.15 & 0  & 0  \\
0.05 & 0  & 0  & 0.15 & 0  \\
0.05 & 0  & 0  & 0  & 0.15
\end{bmatrix}.
\]
(This can be verified by noticing that the stated coupling is induced by the dual potentials $f=g=(-0.575, -0.175, -0.175, -0.175, -0.175)$; see for instance \cite[Theorem~2.2 and Remark~2.3]{Nutz.24}.) We observe in particular that the location $(X_{1},Y_{1})$ is not in the support. On the other hand, because the diagonal of $c$ features the smallest costs, the solution $\gamma_{\infty}$ of the unregularized transport problem is to put all mass on the diagonal; i.e., to transport all mass from $X_{i}$ to $Y_{i}$ for each~$i$. Because of the stationary convergence mentioned in \cref{le:Lagrange}, that is also the solution of the regularized problem for large enough value of $\eta$ (e.g., $\eta=100$ will do):
\[
(\gamma_{\eta=100})=(\gamma_{\infty}) = 
\begin{bmatrix}
0.2 & 0 & 0 & 0 & 0 \\
0 & 0.2 & 0 & 0 & 0 \\
0 & 0 & 0.2 & 0 & 0 \\
0 & 0 & 0 & 0.2 & 0 \\
0 & 0 & 0 & 0 & 0.2
\end{bmatrix}.
\]
In particular, $(X_{1},Y_{1})$ is part of the support for large $\eta$, completing the example. \Cref{fig:counterexample_weights} shows in more detail the weight at $(X_{1},Y_{1})$ as a function of (the inverse of) $\eta$.
\end{example}

\begin{figure}[h!] 
		\centering
		\resizebox{1\linewidth}{!}{%
		\includegraphics{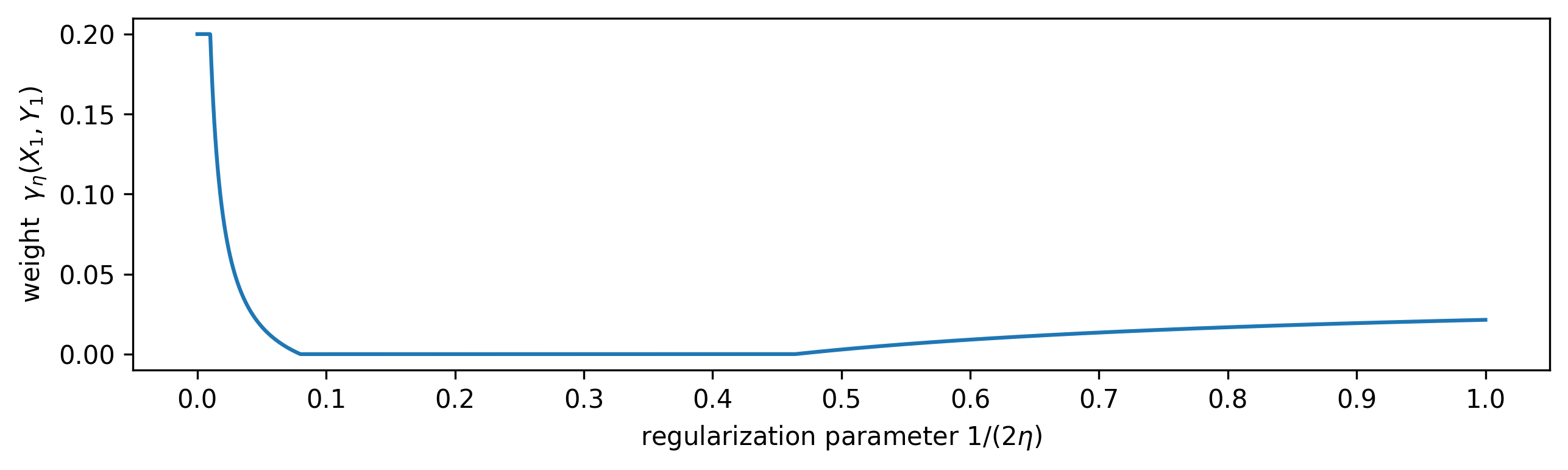}
		
		}%

\caption{Probability mass $\gamma_{\eta}(X_{1},Y_{1})$ plotted against $1/(2\eta)$, showing that $(X_{1},Y_{1})$ is in the support for small and large values of $\eta$ but outside for an intermediate interval.}
\label{fig:counterexample_weights}
\end{figure}

\begin{remark}\label{rk:EssidSolomon}
  The early work of \cite{EssidSolomon.18} considered a minimum-cost flow problem with quadratic regularization that predates the optimal transport literature for this regularization. The authors point out that the solution can have non-monotone support already in the minimal setting of $2\times2$ points~\cite[Figure~1]{EssidSolomon.18}. Similarly, it is not hard to obtain non-monotonicity if instead of $\mu\otimes\nu$ we use a different measure to define the $L^{2}$-norm in~\eqref{EQOT}, similarly as in~\cite[Example~A.1]{Nutz.24}. In those examples, the mechanism causing non-monotonicity is straightforward and quite different than in the present work, where non-monotonicity arises only in higher dimensions.
\end{remark}

\subsection{Proof of \cref{le:BirkhoffSptMonEquiv} and \cref{th:BirkhoffMon}}

The celebrated  Birkhoff's theorem \cite{Birkhoff.46} (or, e.g., \cite[p.\,30]{MarshallOlkinArnold.11}) states that the vertices $\ext \Pi_N$ are the permutation matrices of $\{1,\dots,N\}$; that is, the elements of $\Pi_N$ with binary entries. Following \cite{Brualdi.77}, the faces of $\Pi_N$  can be described using the so-called permanent function. If $A$ is a binary $N\times N$ matrix, its \emph{permanent} ${\rm per}(A)\in\NN$ is defined as the number of permutation matrices $P$ with $P\leq A$ (meaning that $P_{i,j}\leq A_{i,j}$ for all $i,j$). Denoting by $\coh(\cdot)$ the convex hull, the following characterization is contained in~\cite[Theorem~2.1]{Brualdi.77}.

\begin{lemma}\label{Lemma:Brualdi}
    Let $t\in\NN$ and $P^{(1)}, \dots, P^{(t)}\in \ext(\Pi_N)$. Let $A=(A_{i,j})_{i,j=1}^N$ be the matrix such that $A_{i,j}=1$ if there exists $s\in \{1, \dots, t\}$ with $P^{(s)}_{i,j}=1$ and $A_{i,j}=0$ otherwise. Then $\coh(\{P^{(1)}, \dots, P^{(t)}\})$ is a face of $\Pi_N$ if and only if  ${\rm per}(A)=t$. 
\end{lemma}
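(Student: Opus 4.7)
The plan is to derive this equivalence from the cited reference \cite[Theorem~2.1]{Brualdi.77} by combining Birkhoff's theorem with elementary face theory. First I would establish two preliminary facts about faces of $\Pi_N$, then read off the equivalence.

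For any binary matrix $A$, I would define $F_A := \{\pi \in \Pi_N : \pi_{ij} = 0 \text{ whenever } A_{ij} = 0\}$ and verify two things. First, $F_A$ is a face of $\Pi_N$: each $\{\pi \in \Pi_N : \pi_{ij} = 0\}$ is itself a face of $\Pi_N$ (the exposed set of the linear functional $\pi \mapsto -\pi_{ij}$ at its maximum value $0$ on $\Pi_N$), and an intersection of faces is a face. Second, the vertices of $F_A$ are exactly the permutation matrices $P$ with $P \leq A$ entrywise: indeed, $\ext F_A \subseteq \ext \Pi_N$, Birkhoff's theorem identifies $\ext \Pi_N$ with the permutation matrices of $\{1,\dots,N\}$, and such a matrix lies in $F_A$ iff $P \leq A$. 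By the definition of the permanent, $\#\ext F_A = \mathrm{per}(A)$.

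With $A$ as in the statement of the lemma, the inclusion $\coh(\{P^{(1)},\dots,P^{(t)}\}) \subseteq F_A$ is automatic since $P^{(s)} \leq A$ for each $s$. If $\mathrm{per}(A) = t$, then $\{P^{(1)},\dots,P^{(t)}\} = \ext F_A$, so the inclusion is actually an equality and hence $\coh(\{P^{(1)},\dots,P^{(t)}\}) = F_A$ is a face. Conversely, if the convex hull is a face $F$, then $F$ contains every $P^{(s)}$ and therefore contains $F_A$ (because $F_A$ is the smallest face of $\Pi_N$ containing $\{P^{(s)}\}$, its support pattern being the union of the supports of the $P^{(s)}$); combined with the reverse inclusion, $F = F_A$, and comparing vertex counts gives $t = \mathrm{per}(A)$.

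I expect no serious obstacle: the statement is classical and is already explicitly attributed to \cite{Brualdi.77}. The only bookkeeping that needs care is the minimal-face property invoked in the ``only if'' direction, which is a routine consequence of the relative-interior characterization \cite[Theorem~5.6]{Brondsted.83} already referenced in the paper.
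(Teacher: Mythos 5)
The paper does not actually prove this lemma: it simply cites \cite[Theorem~2.1]{Brualdi.77} and moves on. Your proposal, by contrast, gives a self-contained proof from Birkhoff's theorem plus elementary face theory, and the argument is essentially correct. The key ideas---(a) that $F_A := \{\pi \in \Pi_N : \pi_{ij}=0 \text{ whenever } A_{ij}=0\}$ is a face because it is an intersection of exposed faces $\{\pi : \pi_{ij}=0\}$, (b) that $\ext F_A$ consists exactly of the permutation matrices dominated by $A$ and hence has cardinality $\mathrm{per}(A)$, and (c) that $F_A$ is the \emph{smallest} face of $\Pi_N$ containing $\{P^{(1)},\dots,P^{(t)}\}$---together yield both directions cleanly. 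Point (c) deserves a sentence of justification beyond ``its support pattern being the union of the supports'': the smallest face containing the $P^{(s)}$ is the face whose relative interior contains the centroid $\bar\pi = \frac{1}{t}\sum_s P^{(s)}$, and since the facet inequalities of $\Pi_N$ are exactly the nonnegativity constraints $\pi_{ij}\geq 0$, that face is $\{\pi\in\Pi_N:\pi_{ij}=0\text{ for all }(i,j)\text{ with }\bar\pi_{ij}=0\} = F_A$. One further bookkeeping remark: the ``comparing vertex counts'' step in your converse direction tacitly assumes the $P^{(s)}$ are pairwise distinct (as does the lemma's statement, implicitly---otherwise the ``only if'' direction is false, e.g.\ $t=2$, $P^{(1)}=P^{(2)}$). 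With that understanding, your argument is a valid and in fact more transparent alternative to the opaque citation used in the paper.
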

We use \cref{Lemma:Brualdi} to relate faces with the distribution of zeros. 

\begin{lemma}\label{lemma:zerosIffMonot}
    Let $\pi, \pi'\in \Pi_N$. The following are equivalent:
    \begin{enumerate}
        \item there exists $(i,j)$  such that $\pi_{i,j}=0 $ and $\pi_{i,j}'>0 $; 
        \item there exists a face $F$ of $\Pi_N$ such that $\pi\in F  $ and $\pi'\notin F$. 
    \end{enumerate}
\end{lemma}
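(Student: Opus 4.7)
\textbf{Proof plan for \cref{lemma:zerosIffMonot}.} The strategy is to leverage the smallest-face characterization together with Brualdi's description (\cref{Lemma:Brualdi}) to show that the support $\spt\pi$ exactly determines the smallest face $F(\pi)$ containing $\pi$. Once this is established, both directions fall out quickly.

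First I would fix the following observation, which handles both directions simultaneously. Let $F(\pi)$ denote the unique face with $\pi\in\relint F(\pi)$, and write $\ext F(\pi)=\{P^{(1)},\dots,P^{(t)}\}$ (permutation matrices, by Birkhoff). Let $A=(A_{i,j})$ be the binary matrix with $A_{i,j}=1$ iff some $P^{(s)}$ has a $1$ in position $(i,j)$. By \cref{Lemma:Brualdi}, $F(\pi)=\coh(P^{(1)},\dots,P^{(t)})$ is a face and $\mathrm{per}(A)=t$. Since $\pi$ lies in the relative interior of $F(\pi)$, we can write $\pi=\sum_s \lambda_s P^{(s)}$ with every $\lambda_s>0$, from which $\spt\pi=\{(i,j):A_{i,j}=1\}$. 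Moreover, every $\tilde\pi\in F(\pi)$ is a convex combination of $P^{(1)},\dots,P^{(t)}$, so $\spt\tilde\pi\subseteq\spt\pi$.

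For the implication (i) $\Rightarrow$ (ii), I would simply take $F=F(\pi)$ as the witnessing face. It contains $\pi$ by construction, and the observation above shows that every element of $F$ has support contained in $\spt\pi$. Since (i) gives an index $(i,j)\in\spt\pi'\setminus\spt\pi$, we conclude $\pi'\notin F$.

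For the implication (ii) $\Rightarrow$ (i), I would argue the contrapositive: assume $\spt\pi'\subseteq\spt\pi$ and show that $\pi'$ lies in every face containing $\pi$. Since $F(\pi)$ is the smallest such face, it suffices to show $\pi'\in F(\pi)$. Write $\pi'=\sum_r \mu_r Q^{(r)}$ as a convex combination of permutation matrices (Birkhoff). Each $Q^{(r)}$ that appears with $\mu_r>0$ must satisfy $\spt Q^{(r)}\subseteq\spt\pi'\subseteq\spt\pi$, i.e.\ $Q^{(r)}\leq A$. But $\mathrm{per}(A)=t$ means there are exactly $t$ permutation matrices dominated by $A$, namely $P^{(1)},\dots,P^{(t)}$. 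Therefore every $Q^{(r)}\in\{P^{(1)},\dots,P^{(t)}\}$, and $\pi'\in\coh(P^{(1)},\dots,P^{(t)})=F(\pi)$, as desired.

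The only mildly delicate point is the counting step in the second direction: one must recognize that Brualdi's identity $\mathrm{per}(A)=t$ forces $\{P:P\leq A,\,P\text{ permutation}\}=\{P^{(1)},\dots,P^{(t)}\}$. Once this combinatorial input is in place, the rest of the argument is routine. I do not anticipate significant obstacles; the main task is simply to set up the bookkeeping between $\spt\pi$, the binary matrix $A$, and the vertex set of $F(\pi)$ cleanly.
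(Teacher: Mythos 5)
Your proof is correct, and it takes a somewhat different route from the paper's. Both rely on \cref{Lemma:Brualdi} and on the fact $\mathrm{per}(A)=t$ (exactly $t$ permutation matrices dominated by the binary matrix $A$), but the organization differs in both directions. For (i)~$\Rightarrow$~(ii), the paper does not invoke the minimal face or Brualdi at all: it directly exhibits a tangent hyperplane by pairing the polytope against the $0$--$1$ matrix with a single zero at the offending index $(i,j)$, yielding the face $F=\{\tilde\pi\in\Pi_N:\langle\tilde\pi,A\rangle=N\}$ with $\pi\in F$ and $\pi'\notin F$. Your argument instead takes $F=F(\pi)$ (the unique face with $\pi\in\relint F(\pi)$) and uses the support identity $\spt\pi=\{(i,j):A_{i,j}=1\}$ coming from the all-positive convex representation in the relative interior; this is a bit less elementary but unifies the two directions around a single object. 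For (ii)~$\Rightarrow$~(i), the paper argues directly: from $\pi'\notin F$ it extracts a permutation matrix $P\notin\{P^{(1)},\dots,P^{(t)}\}$ with positive weight in the Birkhoff decomposition of $\pi'$, and permanent counting then forces an entry $(i,j)$ with $P_{i,j}=1$ but $A_{i,j}=0$, hence $\pi'_{i,j}>0$ and $\pi_{i,j}=0$. You argue the contrapositive: assuming $\spt\pi'\subseteq\spt\pi$, the same permanent-counting fact places every $Q^{(r)}$ in $\{P^{(1)},\dots,P^{(t)}\}$, so $\pi'\in F(\pi)$, and minimality of $F(\pi)$ finishes. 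The two are essentially dual; yours has the pleasant side effect of proving, along the way, the stronger equivalence $\spt\pi'\subseteq\spt\pi\iff F(\pi')\subseteq F(\pi)$, which is exactly what \cref{le:BirkhoffSptMonEquiv} packages separately in the paper.
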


\begin{proof}
	{\it (i) $\Rightarrow$ (ii):}  Let $(i,j)$ be such that $\pi_{i,j}=0 $ and $\pi_{i,j}'>0 $. Assume w.l.o.g.\ that $(i,j)=(1,1)$.  Then $\pi $ belongs to the set 
    $$F=\{ \pi\in \Pi_N: \langle \pi, A \rangle=N\},$$
    where $A$ is the matrix 
\[
A = \begin{pmatrix}
0 & 1 & 1 & \cdots & 1 \\
1 & 1 & 1 & \cdots & 1 \\
1 & 1 & 1 & \cdots & 1 \\
\vdots & \vdots & \vdots & \ddots & \vdots \\
1 & 1 & 1 & \cdots & 1
\end{pmatrix}. 
\]
As
$ \Pi_N\subset \{ \pi\in \Pi_N: \langle \pi, A \rangle\leq N\},  $ we see that~$F$ is a face. Clearly $\pi'$ does not belong to $F$, proving the claim.

  {\it (ii) $\Rightarrow$ (i):}
Let $P^{(1)}, \dots, P^{(t)}\in \ext \Pi_N$ be such that $ F=\coh(\{P^{(1)}, \dots, P^{(t)}\})$ and let $A=(A_{i,j})_{i,j=1}^N$ be the matrix such that $A_{i,j}=1$ if there exists $s\in \{1, \dots, t\}$ such that $P^{(s)}_{i,j}=1$ and $A_{i,j}=0$ otherwise. We have ${\rm per}(A)=t$ by \cref{Lemma:Brualdi}. As an element of $\Pi_N$, $\pi'$ can be written as a convex combination of permutation matrices, i.e., $$\pi'=\sum_{P\in \ext (\Pi_N) } \lambda_P P, \quad {\rm with}\quad \lambda_P\in [0,1] \ {\rm and}  \ \sum_{P\in \ext (\Pi_N) } \lambda_P=1.$$ 
As $\pi'\notin F$, there exists  $P\in \ext(\Pi_N) \setminus \{P^{(1)}, \dots, P^{(t)}\} $ with $\lambda_P>0$. Using the fact that  ${\rm per}(A)=t$, we derive the existence of $(i,j) $ such that  $P_{i,j}=1$ but $A_{i,j}=0$. In particular, $\pi_{i,j}'\geq \lambda_P>0$ but $\pi_{i,j}=0$, proving the claim.
\end{proof}

\begin{lemma}\label{le:specialFace}
  Let $N\geq1$. Define the permutation matrices $P^{k}\in\R^{N\times N}$, $1\leq k \leq N$ by 
\begin{align*}
    P^{k}_{i,j} = \begin{cases} 1 & \text{ if } i = 1 \text{ and } j = k, \\
        1 & \text{ if } i = k \text{ and } j = 1, \\
        1 & \text{ if } 1 \neq i = j \neq k, \\
        0 & \text{ otherwise.}
            \end{cases}
\end{align*}
That is, $P^{k}$ permutes the first with the $k$-th element. Then $F := \coh(P^{1}, \dots, P^{N})$ is a face of~$\Pi_{N}$ and 
\begin{align}\label{eq:specialLambda}
  \proj_{\aff F}(0)=\sum_{i=1}^{N} \lambda_{k} P^{k} \quad\text{for}\quad \lambda_{1} = \frac{4-N}{N+2}, \quad \lambda_{2}=\cdots=\lambda_{N} = \frac{2}{N+2}.
\end{align} 
As a consequence, 
\begin{align*}
  \proj_{\aff F}(0)\in \begin{cases}
   \relint F, & \text{ if } N=1,2,3, \\
   \rbd F, & \text{ if } N=4, \\
   \R^{N\times N}\setminus F& \text{ if } N\geq 5,
  \end{cases} 
\end{align*}
and in particular \labelcref{it:cond2} is violated for $N\geq5$.
\end{lemma}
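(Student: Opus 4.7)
The plan has four stages. First, I would verify that $F$ is a face using Brualdi's criterion (\cref{Lemma:Brualdi}). The binary union support of $\{P^{1},\dots,P^{N}\}$ is the matrix $A$ with $A_{ij}=1$ iff $i=1$, $j=1$, or $i=j$, and zero otherwise. I would compute $\mathrm{per}(A)$ by noting that a permutation $\sigma$ satisfies $P_{\sigma}\leq A$ iff for every $i\geq2$, either $\sigma(i)=i$ or $\sigma(i)=1$. A short case split (either no $i\geq2$ maps to $1$, giving $\sigma=\mathrm{id}=P^{1}$; or some unique $k\geq2$ maps to $1$, forcing $\sigma(1)=k$ and $\sigma=P^{k}$) shows that exactly $N$ permutations satisfy this, hence $\mathrm{per}(A)=N=t$ and $F$ is a face.

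Second, I would check that $P^{1},\dots,P^{N}$ are affinely independent, so that $F$ is an $(N-1)$-simplex and every point of $\aff F$ admits a unique representation $\sum_{k}\lambda_{k}P^{k}$ with $\sum_{k}\lambda_{k}=1$. Indeed, suppose $\sum_{k}c_{k}P^{k}=0$ with $\sum_{k}c_{k}=0$. For each $k\geq2$, the entry $(1,k)$ is~$1$ only in $P^{k}$, so $c_{k}=0$; then $c_{1}=0$ by the constraint. This uniqueness is crucial for the final classification: membership of the projection in $F$ (respectively $\relint F$) is equivalent to all coefficients being nonnegative (respectively strictly positive).

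Third, I would compute $\proj_{\aff F}(0)$ by solving $\min\|\sum_{k}\lambda_{k}P^{k}\|^{2}$ subject to $\sum_{k}\lambda_{k}=1$. A direct Frobenius inner-product tally gives $\|P^{k}\|^{2}=N$, $\langle P^{1},P^{k}\rangle=N-2$ for $k\geq2$ (they share the diagonal entries $(i,i)$ with $i\in\{2,\dots,N\}\setminus\{k\}$), and $\langle P^{j},P^{k}\rangle=N-3$ for distinct $j,k\geq2$ (they share $(i,i)$ with $i\in\{2,\dots,N\}\setminus\{j,k\}$). The Lagrange condition $\langle P^{k},\sum_{j}\lambda_{j}P^{j}\rangle$ is independent of~$k$. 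Using the symmetry under permutations of $\{2,\dots,N\}$, I would set $\lambda_{1}=a$ and $\lambda_{k}=b$ for $k\geq2$; the two distinct equations collapse to $2a=(4-N)b$, which together with $a+(N-1)b=1$ gives $b=\frac{2}{N+2}$ and $a=\frac{4-N}{N+2}$, matching~\eqref{eq:specialLambda}.

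Finally, I would read off the trichotomy from the sign of $\lambda_{1}=(4-N)/(N+2)$. For $N\in\{1,2,3\}$ all weights are strictly positive, so the projection lies in $\relint F$. For $N=4$, $\lambda_{1}=0$ while $\lambda_{k}>0$ for $k\geq2$, so the projection lies on the proper subface $\coh(P^{2},\dots,P^{N})\subset\rbd F$ (and is not in $\relint F$ by uniqueness of the representation). For $N\geq5$, $\lambda_{1}<0$, so by the same uniqueness the projection is not a convex combination of the $P^{k}$ and hence lies outside~$F$, violating \labelcref{it:cond2}. The only mildly delicate step is the inner-product calculation, but symmetry reduces it to a two-variable linear system that solves cleanly; everything else is a direct application of \cref{Lemma:Brualdi} and the facial structure of simplices.
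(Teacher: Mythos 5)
Your proposal is correct and takes essentially the same route as the paper: Brualdi's permanent criterion to establish that $F$ is a face, followed by the constrained minimization $\min\|\sum_k\lambda_k P^k\|^2$ with $\sum_k\lambda_k=1$, solved via Lagrange multipliers and the symmetry under permutations of $\{2,\dots,N\}$, leading to the same linear system $2a=(4-N)b$, $a+(N-1)b=1$. (The paper expands $\|\cdot\|^2$ entry-wise rather than via the Gram matrix $\langle P^j,P^k\rangle$, but this is only a bookkeeping difference.) Your second stage — verifying affine independence of $P^1,\dots,P^N$, hence uniqueness of the barycentric coordinates — is a welcome addition: the paper invokes "the unique optimal $\lambda$" and reads off the trichotomy from the signs of $\lambda_1,\lambda_0$ without explicitly noting that the representation $\sum_k\lambda_k P^k$ with $\sum_k\lambda_k=1$ is itself unique, which is precisely what justifies concluding membership in $\relint F$, $\rbd F$, or $\RR^{N\times N}\setminus F$ from the signs.
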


\begin{proof}
Define $A \in \RR^{N\times N}$ by
\begin{align}\label{eq:badA}
    A_{i,j} = \begin{cases} 1 & \text{ if } i = j,\\
        1 & \text{ if } i = 1 \text{ or } j = 1, \\
        0 & \text{ otherwise,}
        \end{cases}
\end{align}
so that $A$ is the entry-wise maximum $A=\max(P^{1}, \dots, P^{N})$. One readily verifies that ${\rm per}(A)=N$. Hence by \cref{Lemma:Brualdi}, $F := \coh(P^{1}, \dots, P^{N})$ is a face of~$\Pi_{N}$. To determine $\proj_{\aff F}(0)$, we consider the minimization problem
\begin{align*}
    \min_{\lambda=(\lambda_{1},\dots,\lambda_{N}) \in \RR^{N}} \big\| \sum_{i=1}^{N}\lambda_{i}P^{i} \big\|^{2} \quad \text{ s.t.\ } \quad\sum_{i=1}^{N} \lambda_{i} = 1,
\end{align*}
where $\|\cdot\|$ denotes the Frobenius norm. The Lagrangian for this problem is
\begin{align*}
    L(\lambda, \rho) = \lambda_{1}^{2} + 2\sum_{j = 2}^{N}\lambda_{j}^{2} + \sum_{j=2}^{N}\Big( \sum_{k \neq j} \lambda_{k} \Big)^{2} + \rho \Big( 1 - \sum_{j = 1}^{N} \lambda_{j} \Big).
\end{align*}
Here the first three terms arise from the objective: the first is the value $\big(\sum_{i=1}^{N} \lambda_{i}P^{i} \big)_{1,1}$, the second is the sum of all remaining terms in the first row and column, and the third is the sum of the remaining terms in the diagonal. Finally, the fourth term arises from the constraint. The resulting optimality equations are
\begin{align*}
    \rho & = 2N\lambda_{1} + 2(N-2) \sum_{j=2}^{N} \lambda_{j}, \qquad \sum_{j=1}^{N} \lambda_{j} = 1, \\
    \rho & = 2N\lambda_{i} + 2(N-2)\lambda_{1} +  2(N-3)\sum_{\substack{j=2 \\ j \neq i}}^{N} \lambda_{j} \quad \text{ for } 2 \leq i \leq N.
\end{align*}
By symmetry, the unique optimal $\lambda$ satisfies $\lambda_{i} = \lambda_{j} =: \lambda_{0}$ for all $i,j \geq 2$, so that
\begin{align*}
    & 2N\lambda_{0} + 2(N-2)\lambda_{1}  +  2(N-2)(N-3) \lambda_{0} = 2N\lambda_{1} + 2(N-1)(N-2) \lambda_{0}, \\ 
    & \lambda_{1} + (N-1)\lambda_{0} = 1
\end{align*}
and finally
\begin{align*}
    \lambda_{1} = \lambda_{0}\frac{(4 - N)}{2}, \qquad \lambda_{1} + (N-1)\lambda_{0} = 1.
\end{align*}
Solving for $\lambda_{1}$ and $\lambda_{0}$ yields~\eqref{eq:specialLambda}. Note that $\lambda_{0}>0$ for any $N$ whereas $\lambda_{1}>0$ for $N\in\{1,2,3\}$, $\lambda_{1}=0$ for $N=4$ and $\lambda_{1}<0$ for $N\geq5$, implying the last claim.
\end{proof}

The matrix $A$ of~\eqref{eq:badA} is inspired by the $4\times 4$ matrix in \cite[Example~1.5]{BrualdiDahl.22} where the authors are interested in a different problem (and, in turn, credit \cite{Achilles.78}).\footnote{In \cite{BrualdiDahl.22}, the conclusion is that ``not every zero pattern of a fully indecomposable $(0, 1)$-matrix is realizable as the zero pattern of a doubly stochastic matrix whose diagonal sums avoiding the $0$'s are constant.''} For the present question of monotonicity, this matrix yields a counterexample only for $N\geq5$. As an aside, the subsequent proof that for $N=4$, the face~$F$ considered in \cref{le:specialFace} is (up to symmetries) the only face where $\proj_{\aff F}(0)\in \rbd F$, whereas all other faces $F'$ satisfy $\proj_{\aff F'}(0)\in \relint F'$.

\begin{lemma}\label{le:BirkhoffMonotone}
  Let $1\leq N\leq4$. Then $\Pi_{N}$ is monotone.
\end{lemma}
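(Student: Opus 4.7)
The plan is to apply \cref{th:main}: it suffices to verify conditions~\labelcref{it:cond1} and~\labelcref{it:cond2} for $\Pi_N$ with $1\le N\le 4$. Condition~\labelcref{it:cond1} holds for every $N$: by symmetry of $\Pi_N$ under row and column permutations (which preserves the Frobenius norm), the minimum-norm doubly stochastic matrix must be the $N\times N$ matrix $N^{-1}$ with all entries equal to $N^{-1}$, which has strictly positive entries and hence lies in $\relint \Pi_N$.

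For~\labelcref{it:cond2}, I would use \cref{Lemma:Brualdi} to parametrize faces by their $(0,1)$-supports. A face $F\subset\Pi_N$ corresponds to a binary matrix $A$ via $F=\{\pi\in\Pi_N:\pi_{ij}=0\text{ whenever }A_{ij}=0\}$, and $\aff F$ consists of real matrices with the same zero pattern and unit row and column sums. Thus membership $\proj_{\aff F}(0)\in F$ reduces to this projection being entrywise nonnegative. Two reductions shrink the enumeration: (a) $S_N\times S_N$ acts on $\Pi_N$ by row and column permutations preserving the norm, so only one representative per orbit of patterns has to be checked; (b) if $A$ is (after such permutations) block diagonal, then $F$ factors as a Cartesian product of faces of smaller Birkhoff polytopes and $\proj_{\aff F}(0)$ factors accordingly, so by induction on~$N$ one can restrict to ``irreducible'' patterns.

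A further simplification exploits symmetry of the face itself: if the stabilizer $G\subset S_N\times S_N$ of~$F$ acts transitively on $\ext F$, then by uniqueness of the minimum-norm point of $\aff F$, that point must be $G$-invariant and therefore coincides with the centroid of $\ext F$, which is trivially in $F$. For $N\in\{1,2,3\}$, a short enumeration shows that every irreducible face either falls under this transitivity argument or is handled by a direct computation. For $N=4$, the most delicate face is precisely the one analysed in \cref{le:specialFace}, whose minimum-norm projection sits on $\rbd F$ (but still in $F$, since the problematic coefficient $\lambda_{1}$ vanishes rather than turning negative). The remaining irreducible patterns with non-transitive stabilizers are enumerated via Brualdi's criterion, and for each the associated quadratic program is solved and the resulting projection is checked to be entrywise nonnegative.

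The main obstacle is completeness of the enumeration at $N=4$: one must confirm that no face pattern beyond the one in \cref{le:specialFace} (and its images under the $S_4\times S_4$ action) produces a projection with a strictly negative entry. This is a finite case analysis, carried out by enumerating Brualdi patterns up to symmetry and solving a small system of linear equations per case; the centroid/transitivity shortcut eliminates the vast majority of patterns before any calculation is needed.
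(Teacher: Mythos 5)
Your approach is the same as the paper's: apply \cref{th:main}, dispatch~\labelcref{it:cond1} via the observation that $\proj_{\Pi_N}(0)$ is the constant matrix $N^{-1}$, and reduce~\labelcref{it:cond2} to a finite enumeration of faces via Brualdi's permanent criterion (\cref{Lemma:Brualdi}). The paper verifies~\labelcref{it:cond2} by a brute-force computer-assisted pass over all $49$ (for $N=3$) and $7443$ (for $N=4$) nonempty faces, singling out the $96$ faces equivalent to the one in \cref{le:specialFace} for an analytic check. What you add are three pruning ideas the paper does not use: quotienting by the $S_N\times S_N$ action, factoring reducible (block-diagonal) patterns and handling them by induction on~$N$, and the transitivity shortcut, which reduces any face whose stabilizer acts transitively on its vertex set to its vertex centroid. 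The last one is correct and worth spelling out: if $z=\proj_{\aff F}(0)$, then $z$ is $G$-fixed, and averaging any barycentric representation $z=\sum_k\mu_k P_k$ over $G$ yields a $G$-invariant weight vector, which by transitivity must be constant, so $z=\tfrac1t\sum_k P_k\in F$. These reductions make the case analysis more amenable to a by-hand treatment, whereas the paper delegates it to the computer. Note, however, that your proposal (like the paper's proof) ultimately still rests on a finite enumeration at $N=4$ that is sketched but not carried out explicitly in the write-up; the two proofs are therefore at the same level of completeness, just with different bookkeeping.
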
 

\begin{proof}
  We verify \labelcref{it:cond1} and \labelcref{it:cond2}. Note that $\proj_{\Pi_{N}}(0)$ is the matrix with all entries equal to $1/N$ (corresponding to the product measure). This matrix is clearly in the relative interior of~$\Pi_{N}$, showing \labelcref{it:cond1}. The property \labelcref{it:cond2} is trivial for $N=1,2$. For $N=3$ and $N=4$, we give a computer-assisted proof in the interest of brevity.\footnote{An analytic proof is also available, but requires us to go through 52 different cases for $N=4$.}\footnote{The cases $N\leq3$ can also be obtained as a corollary of the case $N=4$. One can check directly that if the Birkhoff polytope $\Pi_{N}$ is (not) monotone for some $N\in\NN$, then $\Pi_{N'}$ is also (not) monotone for all $N'\leq (\geq)N$.} Specifically, we generate all $N\times N$ permutation matrices and determine all families $\{P_{1}, \dots, P_{m}\}$ of permutation matrices that form the vertices of a nonempty face~$F$ by using the permanent function as in \cref{Lemma:Brualdi}. There are 49 nonempty faces for $N=3$ and 7443 for $N=4$. For each face~$F$, we can numerically compute $\proj_{\aff F}(0)$ or more specifically scalar coefficients $(\lambda_{k})_{1\leq k\leq m}$ such that $\proj_{\aff F}(0)=\sum_{k} \lambda_{k} P_{k}$ and $\sum_{k} \lambda_{k} \leq 1$. The coefficients $\lambda_{k}$ are non-unique for some faces; in that case, we choose positive weights if possible. Note that as $P_{k}$ are binary matrices and $N\leq 4$, the computation can be done with accuracy close to machine precision. It turns out that for $N=3$, all coefficients satisfy $\lambda_{k}>0.01$ (much larger than machine precision), establishing that $\proj_{\aff F}(0)\in \relint F$ and in particular \labelcref{it:cond2}. For $N=4$, most faces have coefficients $\lambda_{k}>0.01$, whereas for 96 faces, one coefficient is numerically close to zero, hence requiring an analytic argument. We verify that all those 96 faces are equivalent up to permutations of rows and columns, corresponding to a relabeling of the points~$X_{i}$ and~$Y_{j}$. Specifically, they are all equivalent to the particular face analyzed in \cref{le:specialFace}, where we have seen that $\proj_{\aff F}(0)\in F$ for $N\leq 4$ (and in fact $\proj_{\aff F}(0)\in \rbd F$ for $N=4$). We conclude that \labelcref{it:cond2} holds for all faces $F$ when $N=4$, completing the proof.
\end{proof}

\begin{proof}[Proof of \cref{th:BirkhoffMon}]
  In view of \cref{th:main}, the last statement of \cref{le:specialFace} implies that $\Pi_{N}$ is not monotone for $N\geq 5$, whereas we have seen in \cref{le:BirkhoffMonotone}  that $\Pi_{N}$ is monotone for $1\leq N\leq4$. 
\end{proof}

\section{On a Problem of Erd{\H o}s}\label{se:Erd{\H o}s}\label{se:erd}

Let $A=(a_{ij})$ be an $N\times N$ matrix. Then its maximal trace is defined as 
$$
  \maxtr A := \max_\sigma \sum_{i=1}^N a_{i,\sigma(i)}
$$
where $\sigma$ ranges over all permutations of $\{1,\dots,N\}$. 
When $A\in\Pi_N$ (i.e., $A$ is doubly stochastic) a result of the early paper~\cite{MarkusMinc.65} states that 
\begin{equation}\label{eq:erdIn}
  \|A\|^{2} \leq \maxtr A,
\end{equation}
where $\|A\|^{2}=\sum_{i,j} a_{ij}^{2}$ is the squared Frobenius norm. For a simple proof of~\eqref{eq:erdIn}, note that the maximal trace can be expressed as
\begin{equation}\label{eq:erdLink}
  \maxtr A = \max_{P\in\ext \Pi_N} \langle P,A \rangle
\end{equation}
where $P$ ranges over the set $\ext \Pi_N$ of all $N\times N$ permutation matrices (Birkhoff's theorem) and the inner product is the Frobenius one, $\langle B,A \rangle =\sum_{i,j} b_{ij}a_{ij}$ for $A=(a_{ij})$ and $B=(b_{ij})$. Consider the linear optimization problem 
$
  \max_{B\in \Pi_{N}} \langle B,A \rangle.
$
Its maximum must be attained at an extreme point. On the other hand, $A\in\Pi_{N}$, hence
$$
  \|A\|^{2} = \langle A,A \rangle \leq \max_{B\in \Pi_{N}} \langle B,A \rangle = \max_{P\in\ext \Pi_N} \langle P,A \rangle = \maxtr A,
$$
proving~\eqref{eq:erdIn}. Erd{\H o}s posed the following problem; see also \cite{BouthatMashreghiMorneauGuerin.24} for further background.

\begin{question}[Erd{\H o}s]
For which doubly stochastic matrices $A$ is inequality~\eqref{eq:erdIn} saturated?
\end{question} 

In formulas, the problem is to describe the set $\{A\in\Pi_{N}: \, \|A\|^{2} = \maxtr A\}$. To simplify the discussion, let us introduce the following terminology. 

\begin{definition}\label{de:erd}
  We call $A\in\R^{N\times N}$ an \emph{Erd{\H o}s matrix} if it is doubly stochastic (i.e., $A\in\Pi_{N}$) and $\|A\|^{2} = \maxtr A$.
\end{definition} 

\begin{definition}\label{de:monFace}
  Let $F$ be a face of $\Pi_{N}$. We say that $F$ is \emph{centered} if $\proj_{\aff F} (0) \in F$. 
\end{definition} 

In this terminology, \cref{th:main} states that $\Pi_{N}$ is monotone iff every face of $\Pi_{N}$ is centered. The following theorem shows that Erd{\H o}s matrices can be parametrized by the centered faces of~$\Pi_{N}$; in particular, we provide a geometric answer to Erd{\H o}s' question. As a reminder, for $N\leq 4$, all faces of $\Pi_{N}$ are centered, whereas for $N\geq 5$, not all faces are centered (see \cref{th:BirkhoffMon}).

\begin{theorem}\label{th:erd}
   For $A\in\R^{N\times N}$, the following are equivalent:
   \begin{enumerate}
   \item $A$ is an Erd{\H o}s matrix,
   \item $A=\proj_{\aff F} (0)$ for some centered face $F$ of $\Pi_{N}$, and $A\neq \proj_{F'} (0)$ for any non-centered face  $F'$ containing~$F$.
   \end{enumerate} 
\end{theorem}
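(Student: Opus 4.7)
The proof splits into the two implications.

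For $(i) \Rightarrow (ii)$, I take $F := F^A = \{B \in \Pi_N : \langle A, B\rangle = \maxtr A\}$, the face of $\Pi_N$ exposed by $\langle A, \cdot\rangle$ (a face by~\eqref{eq:erdLink}). The Erd\H{o}s equality places $A$ in $F^A$, and the relation $\langle A, B - A\rangle = 0$ for $B \in F^A$ together with $A \in \aff F^A$ yields $A = \proj_{\aff F^A}(0) \in F^A$, so $F^A$ is centered. For the extra condition, let $F' \supseteq F^A$ satisfy $A = \proj_{F'}(0)$. \Cref{Lemma:projset} gives $\langle A, B\rangle \geq \|A\|^2$ on $F'$, while Erd\H{o}s applied on $\Pi_N \supseteq F'$ gives the reverse inequality, forcing equality throughout $F'$. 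Thus $\aff F' \subset \{\langle A, \cdot\rangle = \|A\|^2\}$, and the same argument as before produces $A = \proj_{\aff F'}(0) \in F'$, i.e., $F'$ is centered.

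For $(ii) \Rightarrow (i)$, I argue by contradiction: assume (ii) with the given $F$ and suppose $A$ is not Erd\H{o}s. I construct a non-centered face $F' \supseteq F$ with $A = \proj_{F'}(0)$, contradicting (ii). Consider the Lagrangian projection curve $\eta \mapsto x_\eta := \proj_{\Pi_N}(\eta A)$ (\cref{le:Lagrange}). Since $A \in \Pi_N$, $x_1 = A$; and by \cref{Lemma:projset}, $x_\eta = A$ for some $\eta > 1$ is equivalent to $\langle A, B\rangle \leq \|A\|^2$ for all $B \in \Pi_N$, i.e., to Erd\H{o}s. Hence $x_\eta \neq A$ for every $\eta > 1$. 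By \cref{le:affine} and continuity, for $\eta$ in a right-neighborhood of $\eta_0 = 1$ the curve lies in $\relint F^{\text{new}}$ for some unique face $F^{\text{new}}$, with $A \in \rbd F^{\text{new}}$. Since $\proj_{\Pi_N}(0)$ is the uniform matrix $\tfrac{1}{N}\mathbf{1}\mathbf{1}^\top \in \relint\Pi_N$, \cref{le:boundaryEntrance} rules out the case $\eta_0 = 0$ and forces $\proj_{\aff F^{\text{new}}}(0) \notin F^{\text{new}}$: $F^{\text{new}}$ is non-centered.

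The candidate $F' := F^{\text{new}}$ then requires verifying $F \subseteq F^{\text{new}}$ and $A = \proj_{F^{\text{new}}}(0)$. For the second, consider the restricted curve $y_\eta := \proj_{F^{\text{new}}}(\eta A)$, which agrees with $x_\eta$ in a right-neighborhood of $\eta = 1$ so that $y_1 = A$. Applying \cref{le:affine} inside $F^{\text{new}}$ at $\eta = 1$ with the minimal face $F_* \subseteq F^{\text{new}}$ of $\Pi_N$ containing $A$ (which is centered with $A = \proj_{\aff F_*}(0)$, inherited from the hypothesis on $F$), the affine piece on which $y_\eta \in \relint F_*$ is an interval $[\eta_L, \eta_R]\ni 1$ along which $y_\eta \equiv A$; obtaining $A = \proj_{F^{\text{new}}}(0)$ amounts to $\eta_L = 0$, equivalently $\langle A, B\rangle \geq \|A\|^2$ for every $B \in F^{\text{new}}$. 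For $F \subseteq F^{\text{new}}$, both contain $F_*$, and combining $F \subset \{\langle A, \cdot\rangle = \|A\|^2\}$ with the combinatorial description of Birkhoff-polytope faces in \cref{Lemma:Brualdi} yields that $F$'s vertices are among those of $F^{\text{new}}$.

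The technical heart is the inequality $\langle A, B\rangle \geq \|A\|^2$ over all of $F^{\text{new}}$. Geometrically this is natural: the curve enters $F^{\text{new}}$ in the direction $w := \proj_{\lin(\aff F^{\text{new}} - A)}(A)$, and $\langle A, w\rangle = \|w\|^2 > 0$, so motion from $A$ into $F^{\text{new}}$ increases $\langle A, \cdot\rangle$. But this only controls the one-dimensional trajectory; establishing the inequality on the full vertex set of $F^{\text{new}}$ requires Brualdi's combinatorial characterization to analyze how the zero-pattern of the binary matrix associated to $F^{\text{new}}$ interacts with the support of $A$---this is the main obstacle of the argument.
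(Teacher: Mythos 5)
Your proof of $(i)\Rightarrow(ii)$ is correct and essentially identical to the paper's: you take $F=\argmax_{\pi\in\Pi_N}\langle A,\pi\rangle$, use the saturation of~\eqref{eq:erdIn} to place $A$ in $F$ with $\langle A,P-A\rangle=0$ on $\ext F$ (hence $A=\proj_{\aff F}(0)$ and $F$ centered), and then for the second clause use \cref{Lemma:projset} together with~\eqref{eq:erdIn} to show that any face $F'$ with $A=\proj_{F'}(0)$ must be centered.

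For $(ii)\Rightarrow(i)$ you take a genuinely different route, and here the argument has a real gap---one you candidly flag. You run the curve $x_\eta=\proj_{\Pi_N}(\eta A)$, observe $x_1=A$ and $x_\eta\neq A$ for $\eta>1$ (correct, since $A$ non-Erd\H{o}s), identify the face $F^{\text{new}}$ entered just after $\eta=1$, and invoke \cref{le:boundaryEntrance} at $\eta_0=1>0$ to conclude $\proj_{\aff F^{\text{new}}}(0)\notin F^{\text{new}}$. Up to this point the reasoning is sound (the key observation making $\eta_0=1$ work is that $A=\proj_{\aff F}(0)$ forces $A=\proj_{\aff F_A}(0)$, so the curve is \emph{constant} on $\{\eta: x_\eta\in\relint F_A\}$, hence $\sup\{\eta: x_\eta\in\relint F_A\}=1$). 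But you still need two facts to contradict (ii): that $A=\proj_{F^{\text{new}}}(0)$, and that $F\subset F^{\text{new}}$. You correctly reduce the first to $\langle A,B-A\rangle\ge 0$ for all $B\in F^{\text{new}}$, but then defer it as ``the main obstacle''; the motivational computation only gives $\langle A,x_\eta-A\rangle>0$ \emph{along the one-dimensional trajectory}, and the sharpened \cref{Lemma:projset} identity $\langle A-w,B-A\rangle=0$ (where $w=\proj_W(A)$, $W$ the direction space of $\aff F^{\text{new}}$) is a tautology that does not control the sign of $\langle A,B-A\rangle$ for arbitrary $B\in F^{\text{new}}$. There is no obvious reason for the curve's ``next face'' to have $A$ as its minimum-norm point: for $\eta<\inf\{\eta: x_\eta=A\}$ the curve has norm strictly less than $\|A\|$, and if any of those points happen to lie in $F^{\text{new}}$ the claim is false. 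The containment $F\subset F^{\text{new}}$ is likewise only sketched. The paper sidesteps both issues by constructing $F'$ not from the projection dynamics but by \emph{rotating} the tangent hyperplane at $F$ toward the hyperplane with normal $A$: this builds $F\subset F'$ and $F'\subset\{\pi:\langle A,\pi-A\rangle\ge 0\}$ (hence $A=\proj_{F'}(0)$) directly into the construction. If you want to salvage the curve-based approach, you would need an argument that the face entered right after $A$ is exactly the one produced by the hyperplane rotation, or an independent proof of $\langle A,B-A\rangle\ge 0$ on $F^{\text{new}}$; appealing to \cref{Lemma:Brualdi} is unlikely to help here, since the needed inequality is metric rather than combinatorial, and the paper's argument never uses the Birkhoff structure in this step.
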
 

\begin{proof} $(i) \Rightarrow (ii)$: Let $A$ be an Erd{\H o}s matrix and $\{P_{1}, \dots, P_{k}\}= \argmax_{P\in \ext \Pi_N}  \langle P,A \rangle$.  Then we have $\langle A,A \rangle = \langle P_{i} ,A \rangle$ for $i=1,\dots,k$ and $\langle A,A \rangle > \langle P ,A \rangle$ for all $P\in \ext \Pi_N \setminus \{P_{1}, \dots, P_{k}\}$. Hence, $$F:=\coh (P_{1}, \dots, P_{k})=\argmax_{P\in \Pi_N}  \langle P,A \rangle $$ 
  is a face containing $A$. As $\langle A,A \rangle = \langle P_{i} ,A \rangle$ for $i=1,\dots,k$, we deduce  $A=\proj_{\aff F}(0)$; cf.\ \cref{Lemma:projset}. Recalling that $A$ is doubly stochastic, this implies $A\in (\aff F) \cap \Pi_N =  F$, and hence that $F$ is centered. 
  
  To see the second part of (ii), suppose that $A= \proj_{F'} (0)$ for some face $F'$. When $F'$ is not centered, it follows that $A\neq \proj_{\aff F'} (0)$. By \cref{Lemma:projset}, the combination of both facts implies that
  $$0< \langle A, P-A \rangle =\langle A,P\rangle -\|A\|^2 $$
  for some $P\in \ext F'$, contradicting that  $A$ is Erd{\H o}s. (We did not use the condition that $F'\supset F$. Thus, the theorem remains true if in (ii) we require $A\neq \proj_{F'} (0)$ for all non-centered faces $F'$ of $\Pi_N$, and not just the ones containing $F$.)

  $(ii) \Rightarrow (i)$: Assume that $A=\proj_{\aff F} (0)$ for some centered face $F$ of $\Pi_{N}$, hence also $A\in F$, and assume that $A$ is not Erd{\H o}s. We shall construct a face $F'\supset F$ with $A= \proj_{F'} (0)$ that is not centered. Indeed, $A=\proj_{\aff F} (0)$ implies in particular that
  \begin{equation}
      \label{eq:Fsuper}F \subset \{\pi\in \Pi_N: \langle A, \pi-A \rangle = 0\}.
  \end{equation}
  We claim that  there exists a face $F'$ of $\Pi_N$ such that $F\subset F'$, 
  \begin{equation}
      \label{eq:CenteredFprima}
   F' \subset  \{\pi\in \Pi_N: \langle A, \pi-A \rangle\geq  0\}   \quad {\rm and}\quad \{\pi\in \Pi_N: \langle A, \pi-A \rangle> 0\} \cap F'\neq \emptyset.
  \end{equation}
  This claim will complete the proof: Indeed, as $A\in F\subset F'$, the first part of~\eqref{eq:CenteredFprima} implies that $A= \proj_{F'} (0)$. If $F'$ were centered, this would yield $A= \proj_{\aff F'} (0)$, contradicting the second part of~\eqref{eq:CenteredFprima}.

  It remains to prove the claim. Let 
  $\{\pi:\langle B, \pi -A\rangle =0\} $ 
  be a supporting hyperplane such that 
   \begin{equation}
     \label{eq:Bhyper}  
  \Pi_N\subset  \{\pi:\langle B, \pi -A\rangle \leq 0\} \quad {\rm and}\quad  \Pi_N\cap  \{\pi:\langle B, \pi -A\rangle = 0\} =F.
  \end{equation}
  We define
  $$H_\alpha = \{\pi:\langle \alpha B  + (1-\alpha) A, \pi -A\rangle \leq  0\}, \quad   \alpha\in[0,1]$$
 and the function 
 $$ [0,1]\ni \alpha \mapsto \Phi(\alpha):=\begin{cases}
     1& {\rm if } \ \Pi_N\subset H_\alpha,  \\
     0& {\rm otherwise.}
 \end{cases}$$
Let $\pi\in\Pi_N$. As $\langle B, \pi -A\rangle \leq 0$, there are two cases: if $\langle A, \pi -A\rangle \leq 0$, then $\pi\in H_\alpha$ for all $\alpha\in[0,1]$, whereas if $\langle A, \pi -A\rangle > 0$, then $\alpha\mapsto \langle \alpha B  + (1-\alpha) A, \pi -A\rangle$ is nonincreasing. It follows that $\alpha\mapsto H_\alpha \cap \Pi_N$ is nondecreasing (for inclusion) and hence that $\Phi$ is nondecreasing. Notice also that $\Phi$ is right-continuous (by continuity of the inner product) with $\Phi(1)=1$, and that
 \begin{equation}
     \label{eq:relintHiperplane}
      \{\pi\in \Pi_N: \langle A, \pi-A \rangle \leq 0\} \subset H_\alpha \quad \text{for all }\alpha\in [0,1].
 \end{equation} 
In view of \eqref{eq:Fsuper} and \eqref{eq:Bhyper}, the latter implies that 
 \begin{equation}
     \label{eq:Hboundary}
     F \,\subset\, (\rbd H_{\alpha} )\cap \Pi_N\,\subset \, \{\pi\in \Pi_N: \langle A, \pi-A \rangle\geq  0\}\quad \text{for all }\alpha\in [0,1].
 \end{equation}
Define 
$$
\alpha^*:=\min\{\alpha\in [0,1]: \Phi(
\alpha)=1\}\quad\mbox{and} \quad
F':= (\rbd H_{\alpha^*} )\cap \Pi_N,
$$
where the minimum is attained thanks to right-continuity and $\Phi(1)=1$. 
The fact that $\Phi(\alpha^*)=1$ yields that $F'$ is a face of $\Pi_N$. Moreover, \eqref{eq:Hboundary} for $\alpha=\alpha^*$ shows $F\subset F'$ and the first part of \eqref{eq:CenteredFprima}. 
It remains to show the second part of \eqref{eq:CenteredFprima}. Suppose for contradiction that it fails, then by~\eqref{eq:relintHiperplane} we have $F' \subset H_\alpha$ for all $\alpha\in[0,1]$. Thus
$$ 
F':=(\rbd H_{\alpha^*} )\cap \Pi_N \subset H_\alpha \cap \Pi_N \quad\mbox{for all } \alpha\in[0,1].
$$
On the other hand, the definition of $\alpha^*$ implies%
\footnote{Specifically, let $S'\subset\ext\Pi_N$ be the set of extreme points of $\Pi_N$ that are contained in $F'$, or equivalently are contained in $\rbd H_{\alpha^*}$. Note that all of $\ext\Pi_N$ is contained in $H_{\alpha^*}$ as $\Phi(\alpha^*)=1$. As $\ext\Pi_N$ is a finite set and $\alpha\mapsto H_\alpha$ is continuous, there exists $\epsilon>0$ such that $(\ext\Pi_N\setminus S') \subset H_\alpha \cap \Pi_N$  for all $\alpha\in (\alpha^*-\epsilon,\alpha^*)\cap[0,1]$. We choose such an $\epsilon$. If $\alpha\in (\alpha^*-\epsilon,\alpha^*)\cap[0,1]$, then as $\Phi(\alpha)=0$, we know that $\ext\Pi_N\not\subset H_\alpha$, and it follows that $S'\not\subset H_\alpha$; i.e., $F'\not\subset H_\alpha$.}
$$(\rbd H_{\alpha^*} )\cap \Pi_N \not\subset H_\alpha \cap \Pi_N \quad\mbox{for all $\alpha\in (\alpha^*-\epsilon,\alpha^*)\cap[0,1]$}
$$
for some $\epsilon>0$. 
The two displays form a contradiction unless $\alpha^*=0$. Hence $\Phi(0)=1$; that is, $\Pi_N\subset H_0$. The latter means that $\langle A, \pi -A\rangle \leq 0$ for all $\pi\in\Pi_N$, which in turn implies that~$A$ is Erd{\H o}s. This contradiction completes the proof.
\end{proof}

As a consequence of
\cref{le:BirkhoffMonotone}, we deduce the following simpler statement for $N\leq 4$.
\begin{corollary}
     For $A\in\R^{N\times N}$ and $N\leq 4$, the following are equivalent:
   \begin{enumerate}
   \item $A$ is an Erd{\H o}s matrix,
   \item $A=\proj_{\aff F} (0)$ for some face $F$ of $\Pi_{N}$. 
   \end{enumerate} 
\end{corollary}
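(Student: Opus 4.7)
The plan is to deduce this corollary directly from \cref{th:erd} combined with the monotonicity of $\Pi_N$ for $N \leq 4$. The key observation is that \cref{le:BirkhoffMonotone} states that $\Pi_N$ is monotone when $N \leq 4$, and by \cref{th:main} this is equivalent to every face of $\Pi_N$ being centered in the sense of \cref{de:monFace}.

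With this observation in hand, the two qualifiers in condition (ii) of \cref{th:erd} both simplify. First, the requirement that $F$ be centered becomes automatic: any face of $\Pi_N$ will do. Second, the additional requirement ``$A \neq \proj_{F'}(0)$ for any non-centered face $F'$ containing $F$'' becomes vacuous, because there are no non-centered faces of $\Pi_N$ in this regime. Thus condition (ii) of \cref{th:erd} collapses to the statement that $A = \proj_{\aff F}(0)$ for some face $F$ of $\Pi_N$, which is precisely condition (ii) of the present corollary. Applying \cref{th:erd} then gives the equivalence with (i).

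I expect no real obstacle here. The only point to be slightly careful about is confirming that the ``both clauses'' of condition (ii) in \cref{th:erd} genuinely trivialize. The first clause is immediate from the definition of centered. For the second clause, the universally quantified statement over ``non-centered faces $F'$ containing $F$'' is true by vacuity once one knows the set of non-centered faces is empty, which is exactly the content of \cref{le:BirkhoffMonotone} via \cref{th:main}. Thus the proof is essentially a two-line deduction, and can be written out as such.
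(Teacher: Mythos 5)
Your proof is correct and takes exactly the route the paper intends (the paper states the corollary ``as a consequence of'' \cref{le:BirkhoffMonotone} without writing out the deduction). You correctly identify that for $N\leq 4$ all faces of $\Pi_N$ are centered, so in \cref{th:erd}(ii) the word ``centered'' is no restriction and the exclusion clause over non-centered $F'$ is vacuously true.

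One small point of precision: \cref{th:main} gives equivalence of monotonicity with the conjunction of conditions \labelcref{it:cond1} and \labelcref{it:cond2}, and ``every face centered'' is only condition \labelcref{it:cond2}. The paper's sentence ``$\Pi_N$ is monotone iff every face is centered'' (which you echo) is nonetheless correct for $\Pi_N$ because \labelcref{it:cond1} is automatic there; alternatively, the proof of \cref{le:BirkhoffMonotone} verifies \labelcref{it:cond2} directly, so you can cite that lemma's proof rather than inferring \labelcref{it:cond2} from monotonicity.
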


Arguably \cref{th:erd} is a only a partial answer to Erd{\H o}s' question, as it is not straightforward to list all centered faces of $\Pi_{N}$ when $N$ is large. To illustrate how \cref{th:erd} can be useful, we use this geometric point of view to answer a question recently posed in \cite[Question~7.2]{BouthatMashreghiMorneauGuerin.24}: Do all Erd{\H o}s matrices have rational entries? (The main contribution of \cite{BouthatMashreghiMorneauGuerin.24} is to list explicitly all $3\times3$ Erd{\H o}s matrices, and it is observed that they have rational entries. Of course this list can also be recovered using our description.) \Cref{th:erd} allows us to answer this affirmatively in a general and elegant way.

\begin{corollary}\label{co:erd}
  Erd{\H o}s matrices have rational entries.
\end{corollary}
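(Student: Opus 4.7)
The plan is to combine \cref{th:erd} with the observation that orthogonal projection of the origin onto a $\mathbb{Q}$-affine subspace produces a $\mathbb{Q}$-valued point. By \cref{th:erd}, any Erd{\H o}s matrix~$A$ can be written as $A=\proj_{\aff F}(0)$ for some face $F$ of $\Pi_{N}$. By Birkhoff's theorem and \cref{Lemma:Brualdi}, $F=\coh(P_{1},\dots,P_{k})$ for some permutation matrices $P_{1},\dots,P_{k}\in\{0,1\}^{N\times N}$. It therefore suffices to show that $\proj_{\aff F}(0)$ has rational entries whenever $F$ is the convex hull of finitely many integer matrices.

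To do so, write $A=\sum_{i=1}^{k}\lambda_{i}P_{i}$ with $\sum_{i=1}^{k}\lambda_{i}=1$, where $\lambda=(\lambda_{1},\dots,\lambda_{k})\in\RR^{k}$ minimizes $\|\sum_{i}\lambda_{i}P_{i}\|^{2}=\lambda^{\top}G\lambda$ subject to $\mathbf{1}^{\top}\lambda=1$. Here the Gram matrix $G=(\langle P_{i},P_{j}\rangle)_{i,j=1}^{k}$ has integer entries. The first-order (KKT) conditions take the form $2G\lambda=\rho\mathbf{1}$, $\mathbf{1}^{\top}\lambda=1$, i.e., the linear system
\begin{equation*}
\begin{pmatrix} 2G & -\mathbf{1} \\ \mathbf{1}^{\top} & 0 \end{pmatrix}\begin{pmatrix}\lambda\\ \rho\end{pmatrix}=\begin{pmatrix}0\\1\end{pmatrix},
\end{equation*}
which has rational (in fact integer) coefficients. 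When the $P_{i}$ are affinely independent, $G$ is positive definite and the above system is uniquely solvable; Cramer's rule (or Gaussian elimination) then yields $\lambda\in\mathbb{Q}^{k}$ and hence $A=\sum_{i}\lambda_{i}P_{i}\in\mathbb{Q}^{N\times N}$, as desired.

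The only subtlety is that $P_{1},\dots,P_{k}$ may be affinely dependent, so the KKT system could have many solutions $\lambda$ (even though $A$ itself is still uniquely determined). To handle this, I would pass to an affine basis: select a subset $\{P_{i_{1}},\dots,P_{i_{m}}\}\subset\{P_{1},\dots,P_{k}\}$ that is affinely independent and satisfies $\aff F=\aff(P_{i_{1}},\dots,P_{i_{m}})$. Applying the previous argument to this subset, the Gram matrix is positive definite, the KKT system has a unique rational solution, and one recovers $A=\proj_{\aff F}(0)=\sum_{\ell}\lambda_{i_{\ell}}P_{i_{\ell}}$ with rational coefficients. The main ingredient is really just the standard fact that orthogonal projection onto a rational affine subspace (with respect to the rational Frobenius inner product) is a rational affine map; the computation above only makes this explicit.
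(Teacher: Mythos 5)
Your proof is correct and takes essentially the same approach as the paper: invoke \cref{th:erd} to write $A=\proj_{\aff F}(0)$, reduce to the observation that orthogonal projection of $0$ onto a rational affine subspace spanned by integer matrices is rational, and make this explicit via Cramer's rule on a rational linear system. The paper translates by $-P_{1}$ and applies the least-squares formula $(X^{\top}X)^{-1}X^{\top}$ to the integer design matrix $X$ of columns $P_{i}-P_{1}$; you keep the constrained formulation and solve the bordered KKT system with the integer Gram matrix $G$. These are equivalent implementations of the same idea.

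One small imprecision: you claim that affine independence of $P_{1},\dots,P_{m}$ makes $G$ positive definite, whereas positive definiteness of the Gram matrix requires \emph{linear} independence. The statement you want is still true, for either of two reasons. First, for permutation matrices these notions coincide: if $\sum a_{i}P_{i}=0$, pairing with the all-ones matrix $J$ gives $N\sum a_{i}=\langle\sum a_{i}P_{i},J\rangle=0$, so any linear dependence is automatically an affine dependence. Second, even without this, the bordered KKT matrix is invertible under affine independence alone: if $2G\lambda=\rho\mathbf{1}$ and $\mathbf{1}^{\top}\lambda=0$, left-multiplying by $\lambda^{\top}$ gives $\lVert\sum\lambda_{i}P_{i}\rVert^{2}=0$, i.e.\ $\sum\lambda_{i}P_{i}=0$ with $\sum\lambda_{i}=0$, forcing $\lambda=0$ and then $\rho=0$. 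Either way the argument goes through.
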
 

\begin{proof}
  Let $A$ be an Erd{\H o}s matrix. By \cref{th:erd}, $A=\proj_{\aff F} (0)$ for some face $F$ of $\Pi_{N}$. Let $P_{1}, \dots, P_{k}$ be the vertices of $F$ and note that 
  $$
    A=\proj_{\aff F} (0)=\proj_{\aff \{0,P_{2}-P_{1},\dots, P_{k}-P_{1}\}} (-P_{1}) = \proj_{\lin  \{P_{2}-P_{1},\dots, P_{k}-P_{1}\}} (-P_{1}),
  $$
  the projection of $-P_{1}$ onto the linear span of $P_{2}-P_{1},\dots, P_{k}-P_{1}$. Eliminating some of the latter matrices if necessary, we may assume that they are linearly independent. If we see $N\times N$ matrices as vectors of length $N^{2}$ by stacking their columns, the projection admits the following formula, well known as the solution of the least squares linear regression problem:
  \begin{equation}\label{eq:regression}
     A=(X^{\top} X)^{-1} X^{\top}(-P_{1}),
  \end{equation}
  where $X$ is the $N^{2}\times (k-1)$ matrix with columns $P_{2}-P_{1},\dots, P_{k}-P_{1}$. Because $P_{i}$ are permutation matrices, $X$ has integer entries and it follows that $(X^{\top} X)^{-1}$ has rational entries (as can be seen from Cramer's rule for computing the inverse). It is now apparent from~\eqref{eq:regression} that $A$ also has rational entries.
\end{proof}

\newcommand{\dummy}[1]{}

\end{document}